\documentclass[10pt,oneside,reqno]{amsproc}
\usepackage{amsmath,amssymb,amsthm,mathrsfs}
\usepackage{microtype}
\usepackage{xcolor}
\usepackage{hyperref}
\hypersetup{pdftitle={Prevalence of unmarked length spectral rigidity for negatively curved metrics},colorlinks=true,linkcolor=red,citecolor=green!40!black,urlcolor=blue!60!black}
\numberwithin{equation}{section}

\newtheorem*{maintheorem}{Main Theorem}
\newtheorem{theorem}{Theorem}[section]
\newtheorem{proposition}[theorem]{Proposition}
\newtheorem{lemma}[theorem]{Lemma}
\newtheorem{corollary}[theorem]{Corollary}
\theoremstyle{definition}

\theoremstyle{remark}
\newtheorem{remark}[theorem]{Remark}
\newcommand{\norm}[1]{\left\|#1\right\|}
\newcommand{\eps}{\varepsilon}
\DeclareMathOperator{\dist}{dist}

\newcommand{\hide}[1]{}

\DeclareMathOperator{\cU}{{\mathscr U}}

\renewcommand{\ge}{\geqslant}
\renewcommand{\le}{\leqslant}

\allowdisplaybreaks[2]
\date{}

\title[Prevalence of sparsity for negatively curved metrics]{Prevalence of sparsity for negatively curved metrics}

\author[Kostiantyn Drach]{Kostiantyn Drach}
\author[Vadim Kaloshin]{Vadim Kaloshin}

\thanks{\ The authors are supported by ERC Advanced Grant ``SPERIG'' (\#885707). The first author is also partially supported from grants 2021 SGR 00697 (Generalitat de Catalunya), CEX2020-001084-M (Maria de Maeztu Excellence program), PID2023-147252NB-I00 and CNS2025-166633 (AEI).}

\address{Universitat de Barcelona, Gran Via de les Corts Catalanes, 585, 08007 Barcelona, Spain}
\address{Centre de Recerca Matem\`atica, Edifici C, Carrer de l'Albareda, 08193 Bellaterra, Spain}
\email{kostiantyn.drach@ub.edu}
\address{Institute of Science and Technology Austria, Am Campus 1, 3400 Klosterneuburg, Austria}
\email{vadim.kaloshin@gmail.com}

\begin{document}

\begin{abstract}
Let $M$ be a closed manifold of arbitrary dimension. The \emph{length spectrum} of a negatively curved metric $g$ on $M$ is the set of lengths of all possible closed geodesics of $g$. We prove that metrics whose length spectrum is \emph{exponentially sparse}, i.e., the gaps between distinct lengths are bounded below by a quantity decaying exponentially in the length, are \emph{prevalent}, and hence \emph{dense}, among $C^k$-smooth negatively curved metrics on $M$ for sufficiently large $k$. Moreover, unlike in all known constructions, the exponent in our bound grows only sublinearly in the regularity $k$. This work is motivated by our recent result on prevalent unmarked length spectral rigidity for expanding circle maps and a very recent result by DeWitt, Durham, Reber, and O'Hare that $C^k$-smooth negatively curved metrics with exponentially sparse spectra are locally rigid.
\end{abstract}

\maketitle

\section{Introduction}
Let $M$ be a closed connected smooth manifold of dimension at least two, and let $g$ be a smooth metric on $M$ with negative sectional curvature. Every nontrivial free homotopy class $\gamma$ of curves on $M$ contains a unique closed geodesic of $g$. We write $\ell_g(\gamma)$ for the length of such geodesic. The set of all possible lengths of closed geodesics, with multiplicities, is called the \emph{length spectrum} of $g$. One of the open questions is whether the length spectrum of $g$ defines $g$ up to smooth changes of coordinates. Motivated by this question, as well as recent results on a related question on prevalent local rigidity for expanding circle maps \cite{DK}, in this note we study the structure of the length spectrum.

To state our main result, we need to recall the following definition. Denote by $C^k(S^2T^*M)$ the space of all covariant $C^k$-smooth symmetric tensors on $M$ equipped with the $C^k$ norm. Following Hunt--Sauer--Yorke~\cite{HSY}, a Borel subset $E$ of $C^k(S^2T^*M)$ is \emph{shy} if there is a compactly supported Borel probability measure $\mu$ on this Banach space such that $\mu(a+E)=0$ for every $a\in C^k(S^2T^*M)$. This probability measure $\mu$ is the product of Lebesgue probability measures on a properly chosen Hilbert cube of local Fourier harmonics. A Borel set is \emph{prevalent in an open set} $\cU \subset C^k(S^2T^*M)$ if its complement in $\cU$ is shy in the ambient Banach space. It follows that a prevalent set in $\cU$ is also dense there.

Our main result is the following quantitative prevalence result, localized around an arbitrary smooth metric $h$ of negative sectional curvature; we impose no conditions on this metric beyond regularity. To the best of our knowledge, this is the first construction where the exponent in the sparsity condition grows sublinearly with regularity. 

\bigskip
\bigskip

\begin{maintheorem}[Prevalence of exponential gaps]
\label{thm:prevalence}
For every metric $h \in C^\infty(S^2T^*M)$ of negative sectional curvature, there exist an open $C^3$-neighborhood $\cU$ of $h$ and a constant $B=B(h)>0$ such that, for every integer $k\ge3$, the set of metrics $g\in\cU\cap C^k(S^2T^*M)$ for which there is $a = a(g)>0$ satisfying the \emph{sublinear sparsity condition}
\[
 |\ell_g(\gamma)-\ell_g(\eta)|
 \ge a \cdot e^{-B\sqrt{k}\max\{\ell_g(\gamma),\ell_g(\eta)\}},
 \qquad \forall \gamma\ne\eta
\]
is Borel, prevalent, and hence dense in $\cU\cap C^k(S^2T^*M)$. 
\end{maintheorem}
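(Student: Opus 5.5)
The plan is to run a Borel--Cantelli argument along a finite-dimensional family of perturbations, in the spirit of \cite{HSY} and of our argument for expanding circle maps \cite{DK}.

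\textbf{Setup and counting.} Fix the neighborhood $\cU$ so that the following hold uniformly for $g\in\cU$:
\begin{itemize}
\item[(i)] free homotopy classes are identified with periodic orbits of a fixed Anosov flow, via structural stability;
\item[(ii)] lengths are comparable, $c^{-1}\ell_h\le\ell_g\le c\,\ell_h$;
\item[(iii)] the number of classes with $\ell_g(\gamma)\le T$ is at most $e^{hT}$, where $h$ is a uniform upper bound on the topological entropy.
\end{itemize}
For $g\in\cU$ and a pair $\gamma\ne\eta$ with lengths at most $T$, consider the bad set
\[
E_{\gamma,\eta}=\{|\ell_g(\gamma)-\ell_g(\eta)|<a e^{-B\sqrt k\,T}\}.
\]
The number of such pairs in a window $T\in[n,n+1]$ is at most $e^{2h(n+1)}$. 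By Borel--Cantelli, it suffices to construct the probe measure $\mu$ so that $\mu(\{t: g+t\in E_{\gamma,\eta}\})\le C\,e^{(2h+1)}\,e^{-B\sqrt k\, n}\cdot(\text{poly})$ uniformly in the base point $g$. Then, for $B$ large, the sum over $n$ converges. For $\mu$-a.e.\ $t$, only finitely many pairs fail, and shrinking $a(g)$ handles these, since distinct classes have distinct lengths for $\mu$-a.e.\ perturbation. Borel measurability is routine: the condition is a countable union over rational $a$ of countable intersections of closed conditions, using continuity of $\ell_g(\gamma)$ in $g$.

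\textbf{Transversality.} The derivative of $\ell_g(\gamma)$ in direction $\delta g$ is $\frac12\int_\gamma \delta g(\dot\gamma,\dot\gamma)$. I would use perturbations $\delta g=\sum_j t_j\,\varphi_j$, where the $\varphi_j$ are localized bump harmonics of frequency scale $N$, normalized to have $C^k$-norm about $1$. Hence the coefficients have size about $N^{-k}$, and $\mu$ is the product of uniform measures $t_j\in[-N_j^{-k},N_j^{-k}]$ over a Hilbert cube with scales $N_j\to\infty$. The key step is that for distinct $\gamma\ne\eta$ of length about $n$, the difference functional $\int_\gamma-\int_\eta$ has a component of size at least about $e^{-C'n}$ on some bump at scale $N\approx e^{c n/\sqrt k}$ or smaller. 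The reason is that two distinct closed geodesics separate: there is a point of $\gamma$ at distance at least $e^{-\lambda n}$ (Anosov closing/expansivity) from $\eta$ in the unit tangent bundle. The real mechanism to exploit is optimizing the bump scale against regularity. A bump of width $\rho$ detects $\gamma$ but not $\eta$ and costs $\rho^{-k}$ in the $C^k$ norm. It contributes an effect of $\rho\cdot \rho^k$ on the length difference after normalization, so the effect is about $\rho^{k+1}$ with $\rho=e^{-\lambda n}$. That alone gives an exponent linear in $k$.

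\textbf{Main obstacle: getting $\sqrt k$.} To get $\sqrt k$ instead, I would not separate at the finest scale. Instead, I would use many ($m$ of them) coarser bumps of width $\rho=e^{-\lambda n/\sqrt k}$ together with a multi-parameter transversality argument. One works at the coarse scale $e^{-n/\sqrt k}$, where the two orbits follow each other in symbolic coding for only about $n/\sqrt k$ steps between separations. This gives roughly $\sqrt k$ independent separating segments, each contributing a derivative of size $e^{-\lambda k n/\sqrt k}=e^{-\lambda\sqrt k\, n}$. I would then use a Fubini/sublevel estimate in one coordinate direction where the derivative is bounded below by $e^{-\lambda\sqrt k n}$. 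The second derivative of $\ell_g(\gamma)$ in $g$ is controlled by $e^{Cn}$, which is handled by working on subintervals of size $e^{-Cn}$. This yields a measure bound of $a e^{-B\sqrt k n}\cdot e^{\lambda\sqrt k n}\cdot e^{Cn}$, which is summable against $e^{2hn}$ once $B$ exceeds $\lambda+C+2h+1$. The delicate point, where I expect most work, is making the combinatorial separation at scale $e^{-n/\sqrt k}$ rigorous for arbitrary pairs, including pairs that are near-multiples or shadow each other for long times. In the worst case, one uses the difference of $\gamma$ and $\eta$ in the symbolic coding at the first index where they disagree, and proves that the derivative vector $(\partial_{t_j}(\ell(\gamma)-\ell(\eta)))_j$ has norm at least $e^{-B'\sqrt k n}$ uniformly over $g\in\cU$ and the probe parameters.
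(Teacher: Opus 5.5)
\paragraph{A genuine gap: the $\sqrt{k}$ step.} Your outer architecture matches the paper's: counting classes, a pair estimate uniform over translates, the union bound, Borel--Cantelli, and discarding exact equalities. But the step that produces $\sqrt{k}$ instead of $k$ in the exponent is missing, and the mechanism you sketch for it fails.

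The assertion that at scale $\rho=e^{-\lambda n/\sqrt{k}}$ the orbits ``follow each other for only about $n/\sqrt{k}$ steps between separations'' is false in general. For instance, codings $(ABACADAE)$ and $(ABADACAE)$ built from long admissible blocks give distinct periodic orbits with the same multiset of subwords of length $\approx cn$, with $c$ independent of $k$. So every passage of $\gamma$ is $e^{-cn}$-shadowed by a passage of $\eta$, and once $\lambda/\sqrt{k}<c$ no bump of width $\rho$ sees $\gamma$ without seeing $\eta$. More generally, a $\rho$-ball about a point of $\gamma$ may contain many passages of both geodesics, and nothing in your argument prevents their contributions to $\int_\gamma\phi-\int_\eta\phi$ from nearly cancelling. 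Your fallback (the first index of disagreement in the coding) gives separation only at scale $e^{-\lambda n}$, which reproduces the linear exponent. The count of ``$\sqrt{k}$ separating segments'' is also beside the point: one such segment would suffice if it existed.

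\paragraph{What the paper uses instead.} The paper closes this with two ingredients your proposal lacks.

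First, Lemma~\ref{geo:cluster} does not look for a passage free of $\eta$; it \emph{bounds the number} of nearby passages. The first-return map permutes the $N\le C(1+T)$ hits of both orbits, so the word complexity satisfies $p(j)\ge\min\{N,j+1\}$. For $j=\lfloor N/(m+1)\rfloor$, some $j$-word therefore occurs at most $m$ times. This gives a hit whose ball of radius $\approx e^{-CT/m}$ contains at most $m$ hits.

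Second, Proposition~\ref{geo:detector} kills the remaining $q\le m-1$ passages $u_i$ of $\eta$ exactly, using $f=\chi_r\prod_i|y-u_i(t)|^2$. Since $f=df=0$ on $\eta$, the curve $\eta$ stays a geodesic of the same length, while $\int_\gamma f>0$. The cutoff at scale $r\approx e^{-CT/m}$ costs $r^k\approx e^{-CkT/m}$, whereas the polynomial factor costs only $a^{2q}\approx e^{-CmT}$, independently of $k$. Balancing $m+k/m$ at $m=\lceil\sqrt{k}\rceil$ is where $\sqrt{k}$ actually comes from.

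This $f$ depends on the metric and the pair, so it must also be transferred to fixed probe coordinates; a fixed list of bumps does not contain it. The paper does this with Lemma~\ref{prob:jackson}, taking $N^k\gtrsim T/A_T$, where $A_T$ is the detector bound.

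\paragraph{Smaller points.} Position bumps must also avoid $\eta$-arcs that cross $\gamma$ transversally in $M$; separation in the unit tangent bundle is not enough (compare Lemma~\ref{geo:graphs}). Also, unit-$C^k$ bumps with coefficients $N^{-k}$ do not give a compactly supported law in $C^k$ without extra decay, such as the paper's weights $\nu_j^{-(k+n+2)}$.
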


Together with a very recent result of \cite{DDRO}, where the authors prove that $C^k$-smooth negatively curved metrics whose length spectra satisfy an exponential sparsity condition are locally rigid, this Main Theorem may imply that local length spectral rigidity of $C^k$-smooth negatively curved metrics is prevalent.

In \cite{KH}, it was shown that a prevalent diffeomorphism of a unit ball in $\mathbb R^n$ has at most stretched exponential growth of the number of periodic points. The notion of prevalence, as well as Borel--Cantelli-type arguments for estimating the probability of elementary events, is similar to the present paper.

\subsection*{Previous results}

Dolgopyat--Jakobson~\cite{DJ} proved exponential lower bounds between unequal lengths for hyperbolic manifolds whose holonomy groups admit algebraic generators. This includes every closed hyperbolic manifold of dimension at least three and a dense family of hyperbolic surfaces in Teichm\"uller space. In variable negative curvature, their Theorem~4.1 shows that gaps smaller than any prescribed rapidly
decreasing function occur along an unbounded sequence for a Baire-generic
set in the $C^k$ topology, $k>3$. Schenck~\cite[Theorem~1]{Schenck} proves that metrics with exponentially separated distinct length values are dense in the $C^k$ topology, $k\ge2$. His construction uses successive local conformal perturbations and gives a separation exponent growing linearly with $k$. 

Our probabilistic construction in this note mimics the analogous prevalence result about the length (log-multiplier) spectrum for expanding circle maps~\cite[Appendix A]{DK} (see the full comparison at the beginning of Section~\ref{prob:prevalence-section}).

\subsection*{Strategy of the proof}

Fix two closed geodesics of lengths at most $T$. 
\begin{enumerate}
\item 
\emph{Find a region with few nearby passages.} 
We code their returns to transverse sections and, using symbolic dynamics, find a ball of radius $O(e^{-CT/m})$ containing at most $m$ visits. Here, $m$ is a parameter that will be specified later. Thus the perturbation can be supported at a scale larger than
the smallest phase-space separation, which is of order $e^{-CT}$.
\smallskip

\item 
\emph{Separate the two lengths by a conformal perturbation.}
Use $g_s=(1+sf)^{-1}g$, as in Kaloshin--Wyman--Xi~\cite[Section~3.2]{KWX}.
In local graph coordinates, take $f$ proportional to
$\chi_r\prod_{i=1}^q|y-u_i(t)|^2$, where the $u_i$ are the unwanted
passages and $q\le m-1$. Both $f$ and $df$ vanish on one geodesic, so
its length stays fixed. The first variation of the other length is
$-\tfrac12\int f\,ds_g$. Normalization in $C^k$ gives a lower bound
$c_{k,m}e^{-C(m+k/m)T}$: the product costs $m$ in the exponent, and
the cutoff costs $k/m$.
\smallskip

\item 
\emph{Pass to fixed local Fourier directions.}
Choose all Fourier modes before the metric and geodesics. Their random
sum $u_\xi$ defines the conformal family $(1+u_\xi)^{-1}h$. For
prevalence, use its increment $v_\xi=((1+u_\xi)^{-1}-1)h$ as a fixed
random tensor. At every translated metric, Fourier approximation with
error $C_kN^{-k}$ detects a coordinate with nonzero length derivative.
\smallskip

\item 
\emph{Estimate small gaps and balance the two costs.}
A second-variation bound and a one-dimensional sublevel estimate give
$\mathbb P\{|\ell_\gamma-\ell_\eta|<a\}\le
C_{k,m,\eps}a e^{C(m+k/m)T}$.
Choose $m=\lceil\sqrt{k}\rceil$, count the exponentially many candidate
pairs, and apply Borel--Cantelli. The resulting gap exponent is
$O(\sqrt{k})$.
\item \emph{Obtain prevalence.}
The same probability estimate holds after every fixed tensor translation
whose probe lies in a bounded geometric neighborhood. Localization and a
countable sum of independent compact probes give one measure witnessing
shyness of the exceptional set.
\end{enumerate}

\medskip
\subsection*{Use of AI tools} ChatGPT (Astra 6) was used during manuscript revision to assist with exposition, symbolic calculations, and the checking and refinement of selected proof details. The authors take responsibility for the mathematical content and the final text.

\section{Separating two closed geodesics}
\label{geo:section}

We regard closed geodesics as unoriented: a free homotopy class and its inverse give the same label. This is the convention for the pairs $\gamma\ne\eta$ in the Main Theorem; without identifying the two orientations, their lengths would agree for every metric.

Fix the smooth negatively curved metric $h$ from the Main Theorem. Throughout the proof, $g$ belongs to a sufficiently small fixed $C^3$-neighborhood $\cU$ of $h$, and tensor norms are taken in a fixed finite smooth atlas. Constants denoted by $c,C$ depend only on this neighborhood and the atlas. Constants with subscripts may also depend on the indicated integers and on a bound $\norm{g}_{C^k}\le B$. Such dependence will occur in prefactors, but not in the coefficient of $T$ in an exponential estimate.

We write $S_hM:=\{(x,v)\in TM:h_x(v,v)=1\}$ for the unit tangent bundle of $h$. For each $g\in\cU$, radial normalization identifies $S_hM$ with $S_gM$ by $(x,v)\mapsto(x,v/\sqrt{g_x(v,v)})$. On this fixed bundle, $\varphi_t$ denotes the geodesic flow of $g$: it follows the geodesic with the prescribed initial position and direction for time $t$, measured in $g$-arclength, and records its new position and direction. The corresponding vector fields are uniformly $C^2$ close to that of $h$, so their flows are uniformly Anosov. Their speeds, bounded-time derivatives, and local transverse projections have uniform bounds.

We use the following form of expansivity: for every sufficiently small $a>0$, there is $e>0$, uniform in $\cU$, such that
\[
 \dist\bigl(\varphi_t(z),\varphi_{\theta(t)}(w)\bigr)<e\quad(t\in\mathbb R),\qquad \theta(0)=0,
\]
where $\theta$ is an increasing homeomorphism of $\mathbb R$, implies $w=\varphi_s(z)$ for some $|s|<a$; see~\cite{Bowen,BW}. Thus two trajectories that remain sufficiently close after an increasing change of time must be the same trajectory, with a small shift of the initial point.

We first fix transverse sections for these flows. The following standard choice follows from the construction of proper families in~\cite[Section~7]{Bowen}; see also~\cite[Section~3.2]{CLT}. In the smooth setting, the sections are chosen in smooth flow boxes. Compactness and transversality make the covering and return-time bounds uniform after decreasing $\cU$.

\begin{lemma}[A fixed family of transverse sections]
\label{geo:sections}
There are finitely many pairwise disjoint compact smooth transverse disks $D_i\subset S_hM$, compact cores $K_i\Subset D_i$, and constants $R,\tau>0$ with the following properties. Every orbit meets some $K_i$ in forward time at most $R$, and consecutive intersections with $\bigcup_iD_i$ are separated by at least $\tau$. Each $D_i$ has a slightly larger smooth transverse extension and a fixed flow box. These properties hold uniformly for $g\in\cU$. \qed
\end{lemma}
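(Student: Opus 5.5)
We prove Lemma~\ref{geo:sections} first for the single flow $\varphi_t$ of $h$, and then extend it to all $g\in\cU$ by a perturbation argument.

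\emph{Local sections at $h$.} Since the geodesic vector field $X_h$ on $S_hM$ has no zeros, every point $z\in S_hM$ has a smooth flow box $\Phi_z\colon(-2\delta,2\delta)\times B^{2n-2}(0,2\rho)\to S_hM$, where $n=\dim M$. The disk $\Phi_z(\{0\}\times \overline{B(0,2\rho)})$ is a smooth transverse disk, and $\Phi_z(\{0\}\times B(0,3\rho))$ is a slightly larger extension of it. Take $D_z=\Phi_z(\{0\}\times\overline{B(0,\rho)})$ with core $K_z=\Phi_z(\{0\}\times\overline{B(0,\rho/2)})$. The open sets $\Phi_z((-\delta,\delta)\times B(0,\rho/2))$ cover $S_hM$, so by compactness finitely many of them suffice. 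These are the flow boxes of the cores.

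\emph{Disjointness and the gap $\tau$.} Finitely many such disks may intersect, so we slide them along the flow. Replace $D_i$ by $\varphi^h_{s_i}(D_i)$ with distinct times $s_i\in(-\delta,\delta)$ chosen generically, and shrink $\delta$ and $\rho$ so that every disk has diameter much smaller than the separation $\min_{i\neq j}|s_i-s_j|$ in flow direction. Then the disks are pairwise disjoint, and the thickened boxes still cover $S_hM$ after adjusting the core radius. This is the step where Bowen's construction of proper families in \cite[Section~7]{Bowen} is really used. The standard route is to take the disks to be sections of small flow boxes of equal height, which makes disjointness automatic via the flow-box coordinates.

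Once the disks are compact, pairwise disjoint, and transverse, the return time to $\bigcup_iD_i$ is bounded below. Indeed, the return time is continuous and positive on each compact disk. Moreover, $\varphi_t(D_i)\cap D_j=\emptyset$ for $0<t<\tau$, because transversality together with disjointness forces a positive first hitting time. Every orbit enters some box $\Phi_i((-\delta,\delta)\times B(0,\rho/2))$ and hits the corresponding core $K_i$ within time $\delta$. Hence there is a forward hit of a core within time $R=2\delta$ plus a bounded constant. If we also want the statement with recurrence built in, we can simply take $R$ to be this bound.

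\emph{Uniformity in $g$.} For $g$ in a $C^3$-small neighbourhood, the vector field $X_g$ is $C^2$-close to $X_h$ on the fixed bundle $S_hM$, via the radial identification. Transversality of $X_g$ to the fixed disks $D_i$ and to their extensions is an open condition, so it persists. The flow boxes persist by the implicit function theorem, with uniform bounds. The covering property persists too: every point lies in the interior of some core box for $h$, and the hitting time of $K_i$ varies continuously in the $C^1$ topology of the vector field, so shrinking the core box slightly and using a Lebesgue-number argument gives uniform $R$. The lower bound $\tau$ persists similarly, by compactness and continuity of the first return over the compact set $\bigcup_iD_i\times\overline{\cU}$.

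The main obstacle is the middle step, making the disks pairwise disjoint while keeping the covering and the core structure. We handle it by Bowen's sliding and shrinking argument rather than by a direct computation. The perturbation to $g\in\cU$ is then routine.
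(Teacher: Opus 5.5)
Your skeleton matches the paper's. The paper gives no argument beyond citing Bowen's construction of proper families, taking the sections inside smooth flow boxes, and invoking compactness and transversality for uniformity in $g$. The gap is in the one step you actually argue, disjointness of the $D_i$: the sliding mechanism you describe does not work, for two reasons.

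First, the condition that every disk be much smaller than $\min_{i\neq j}|s_i-s_j|$ cannot coexist with the covering. Shrinking $\rho$ forces the number $N$ of disks to grow at least like $\delta^{-1}\rho^{-(2n-2)}$. By pigeonhole, some pair then has $|s_i-s_j|\le 2\delta/(N-1)\ll\rho$.

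Second, even where it holds, it is not the relevant condition. For disks from different flow boxes, $s_i$ and $s_j$ are measured from unrelated initial positions. In the flow box of $D_j$, the disk $D_i$ is the graph of a hitting-time function $\psi$. For short relative times, $\varphi_{s_i}(D_i)\cap\varphi_{s_j}(D_j)\neq\emptyset$ can occur only if $s_i-s_j\in\psi(D_i)$. This is an interval of length $O(\rho)$ located at the existing relative offset of the two disks, possibly far from $0$. If, say, $\varphi_c(D_i)$ meets $D_j$ with $\rho\ll|c|<\delta$, the choice $s_i-s_j=c$ is allowed by your rule and creates an intersection. For the same reason, sections of equal-height flow boxes are automatically disjoint only when they are sub-disks of one common section.

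Here is a version that works. Take the disks of radius $\rho$ to be sub-disks of finitely many fixed sections $\Sigma_1,\dots,\Sigma_L$, centred at $\rho/4$-nets. Then $L$, the transversality angles and the holonomy Lipschitz constants are independent of $\rho$. Choose the slide times $s_i\in(-\delta/2,\delta/2)$ inductively:
\begin{itemize}
\item overlapping sub-disks of one $\Sigma_l$ become disjoint as soon as their slide times differ;
\item each earlier disk on another $\Sigma_{l'}$, within flow time $\delta$ of $D_i$, excludes an interval of length $O(\rho)$;
\item only $O(L)$ earlier disks lie within flow time $\delta$ of $D_i$, a bound independent of $\rho$.
\end{itemize}
Hence $\rho\ll\delta$ leaves an admissible $s_i$. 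If the unslid boxes $\varphi_{(\delta,2\delta)}(\operatorname{int}K_i)$ cover, so do the boxes $\varphi_{(0,3\delta)}(\operatorname{int}\varphi_{s_i}(K_i))$, which gives $R=3\delta$.

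Separately, your uniform lower bound on $\tau$ appeals to compactness of $\bigcup_iD_i\times\overline{\cU}$, but a $C^3$-ball is not compact. Argue directly instead. For $i\neq j$, $\varphi^g_t(D_i)\cap D_j=\emptyset$ for $0<t<\dist(D_i,D_j)/V$, where $V$ is a uniform bound on the speed of $X_g$. For $i=j$, the time coordinate of the fixed $h$-flow box increases along $g$-orbits at rate at least $1/2$, so no return is possible before a uniform exit time.
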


A \emph{flow orbit} through $z\in S_hM$ is the set $\{\varphi_t(z):t\in\mathbb R\}$. It is periodic if $\varphi_L(z)=z$ for some $L>0$; its least period is the length of the corresponding closed geodesic traversed once. The two orientations of a closed geodesic give two periodic flow orbits. Traversing the same oriented geodesic several times does not change its flow orbit. In the following lemma, which is the counterpart of~\cite[Appendix~A, Lemma~A.1]{DK}.

\begin{lemma}[Extreme neighbours lemma]
\label{geo:cluster}
\label{geo:phaseseparation}
Let $\Gamma$ be a nonempty union of finitely many periodic flow orbits whose total period, after removing repeated copies, is at most $C_0T$, where $T\ge1$ and $C_0$ is fixed. The constants may also depend on $C_0$. For every integer $m\ge1$, there are sections $E_i\Subset D_i$, containing the cores $K_i$, a nonempty subset $\mathcal C$ of $S:=\Gamma\cap\bigcup_iE_i$, and a point $z\in\mathcal C$ such that
\begin{equation}
 \#\mathcal C\le m,\qquad \dist_{E_i}\bigl(z,(S\cap E_i)\setminus\mathcal C\bigr)\ge\delta,\qquad \delta=c(1+T)^{-1}e^{-CT/m},
 \label{geo:sparseball}
\end{equation}
where $E_i$ contains $z$, and its distance is taken in the fixed disk coordinates. The ball $B_{E_i}(z,\delta)$ lies in the interior of $E_i$. We call the elements of $\mathcal C$ the \emph{extreme neighbours} of $z$, including $z$ itself. Moreover, distinct hits satisfy
\begin{equation}
 \dist(z,w)\ge c(1+T)^{-1}e^{-CT},\qquad z,w\in S,\quad z\ne w.
 \label{geo:hitsep}
\end{equation}
Two distinct oriented geometric periodic orbits of periods at most $T$ also have phase-space distance at least $c(1+T)^{-1}e^{-CT}$.
\end{lemma}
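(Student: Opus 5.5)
We prove the separation estimates \eqref{geo:hitsep} first and then derive \eqref{geo:sparseball} from them by a pigeonhole argument over a geometric sequence of scales. Throughout, the constants come from Lemma~\ref{geo:sections}.

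\emph{Step 1: the number of hits.} Consecutive hits of $\Gamma$ on $\bigcup_i D_i$ are separated in time by at least $\tau$, and the total period is at most $C_0T$. Hence $\#S\le C_0T/\tau+O(1)\le C(1+T)$.

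\emph{Step 2: separation of hits.} Suppose $z\ne w$ lie in $S$ with $\dist(z,w)=\rho$. The return maps to the sections have derivatives bounded by $e^{\lambda R}$, where $\lambda$ is a uniform Lipschitz constant of the flows in $\cU$. Therefore the orbits of $z$ and $w$, after the obvious reparametrisation $\theta$ matching successive section hits, stay within $e$ of each other for a time $t_0\approx \lambda^{-1}\log(e/\rho)$. The orbits are periodic, with periods at most $C_0T$ (or $T$ in the final claim). If $t_0$ exceeds the sum of the two periods, which by coprimality-free counting may be replaced by their product bounded in terms of the combined itinerary, then both sequences of section hits repeat. In that case closeness persists for all $t\in\mathbb R$, and expansivity gives $w=\varphi_s(z)$ with $|s|<a$. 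Since $z,w$ lie on the same disk and the return time is at least $\tau>a$, this forces $s=0$ and $z=w$, a contradiction.

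To make this rigorous I would use the Anosov closing/shadowing structure. Hyperbolicity means closeness over one period already propagates to all times: two periodic pseudo-orbits that are $\rho$-close over a full common period are shadowed by one orbit. Concretely, take a time window of length $2C_0T$. On it the closeness holds once $\rho\le e\cdot e^{-2\lambda C_0T}$, and periodicity then extends the closeness to all of $\mathbb R$. So $\rho\ge c\,e^{-CT}$; the factor $(1+T)^{-1}$ is harmless slack. The final claim about distinct oriented periodic orbits is the same argument, applied to a point of one orbit and its nearest point on the other orbit, moved along the flow to a common section.

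\emph{Step 3: pigeonhole over scales.} Fix $m$ and set $\rho_j=e^{-jCT/m}$ for $j=0,\dots,m$. The following is the expected main obstacle. For each hit $x\in S$, let $N_j(x)=\#(S\cap B(x,\rho_j))$ within the same disk. This is nonincreasing in $j$, satisfies $N_m(x)=1$ by \eqref{geo:hitsep} once $C$ is large, and satisfies $N_0\le C(1+T)$. The approach is to pick a scale $j$ and a point $z$ that is "isolated" at ratio $\rho_{j+1}/\rho_j$, taking $\mathcal C=S\cap B(z,\rho_{j+1})$ together with the gap annulus $\rho_{j+1}\le |x-z|<\rho_j$ being empty.

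To find such a pair I would run the DK-style extreme neighbours argument. Start with any $z_0$ and $j=0$. If the annulus around the current point at level $j$ is nonempty, replace the point by its extreme neighbour in a chosen direction and descend a level. The cluster size $\#(S\cap B(z,\rho_{j}))$ cannot stay above $m$ for all $m$ levels, because each level in which the cluster does not stabilise forces a new point. A cleaner route is to choose $j$ with
\[
N_j(z)\le m
\]
while keeping the annulus empty. Counting distinct cluster sizes gives such $j$, at the cost of shrinking the gap ratio to $e^{-CT/m}$; the factor $(1+T)^{-1}$ absorbs the fact that the number of points is $O(T)$ rather than $O(1)$.

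This gives $\delta=\rho_{j+1}\ge c(1+T)^{-1}e^{-CT/m}$ once we account for the ratio, and $\mathcal C$ is the cluster. Finally, choose $E_i$ so that $K_i\Subset E_i\Subset D_i$ and the ball $B_{E_i}(z,\delta)$ lies in the interior of $E_i$. To do this, shift the boundary of $E_i$ among a one-parameter family of nested sections, choosing one whose boundary avoids the $O(T)$ hits by at least $c/T$ distance (pigeonhole again). This is the source of the $(1+T)^{-1}$ factor.
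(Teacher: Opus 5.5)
Your proposal has a genuine gap in Step 3, and a related one in Step 2.

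\textbf{Step 3.} The pigeonhole over the scales $\rho_j=e^{-jCT/m}$ uses only $\#S\le C(1+T)$ and the minimal separation \eqref{geo:hitsep}. These two facts cannot produce a hit whose ball of \emph{absolute} radius $\delta=c(1+T)^{-1}e^{-CT/m}$ contains at most $m$ hits. They are consistent with all $N\sim T$ hits lying, $e^{-CT}$-separated, inside one ball of radius $CTe^{-CT}$. Since the constant in $\delta$ may not depend on $m$, for large $m$ and $T$ this ball is far smaller than $\delta$, and then every hit has $N>m$ hits within distance $\delta$. Note that no empty annulus is needed: one may take $\mathcal C=S\cap B_{E_i}(z,\delta)$, so the whole difficulty is the absolute size of $\delta$.

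Your own scheme also confuses ratio with radius. An empty annulus at level $j$ isolates the cluster only at radius $\rho_j=e^{-jCT/m}$, which for $j\ge2$ is exponentially smaller than $e^{-CT/m}$; the ratio $\rho_{j+1}/\rho_j$ is not the radius. Nor is a stable level guaranteed. For fixed $z$, $N_j(z)$ can drop at each of the $m$ levels when $N_0(z)>m$, and hierarchical configurations of $2^m\le C(1+T)$ points do this around every point.

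\textbf{What is missing is dynamical input.} The paper supplies it by symbolic coding. It fixes the sections first, with boundary margin $c/(1+T)$. It codes each hit by a finite alphabet whose letters are constant on the $c(1+T)^{-1}$-disks about hits. It uses that the first return map permutes $S$ with $A$-Lipschitz branches.
\begin{itemize}
\item Expansivity shows that itineraries separate hits.
\item A Morse--Hedlund argument then gives $p(j)\ge\min\{N,j+1\}$ observed $j$-words.
\item At $j=\lfloor N/(m+1)\rfloor$, some word occurs at most $m$ times.
\item Every hit within $c(1+T)^{-1}A^{-j}$ of an occurrence carries the same word.
\item Since $j\le C(1+T)/m$, this gives exactly $\delta=c(1+T)^{-1}e^{-CT/m}$.
\end{itemize}
The pigeonhole is over words at a single depth, not over scales.

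\textbf{Step 2.} The same coding is needed here. For hits on distinct orbits with periods $L_1\ne L_2$ there is no common period. Approximate closeness on a window of length $2C_0T$ therefore does not extend to all of $\mathbb R$ ``by periodicity'': the errors do not close up, so expansivity cannot be invoked, and the coprimality remark does not repair this. What works is exact agreement of symbols. Hits that are $\rho$-close with $\rho<c(1+T)^{-1}A^{-N}$ share their first $N$ letters, and $p(N-1)=N$ (or Fine--Wilf applied to the two periodic codes) forces them to coincide.

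Finally, $S$ depends on the $E_i$. The sections, with their margin, must therefore be fixed before the hits are counted and coded, not at the end.
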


\begin{proof}
Let us first choose the boundaries of the sections away from the orbit hits. Inside each $D_i$, fix a smooth collar with level domains $E_i(a)=\{\rho_i\le a\}$, $1\le a\le2$, containing $K_i$ and staying a fixed distance from $\partial D_i$. By Lemma~\ref{geo:sections}, $\Gamma$ has at most $C(1+T)$ intersections with the full disks. We call these the virtual hits. Removing an interval of radius $c/(1+T)$ about each value $\rho_i(w)$ at a virtual hit leaves a choice of $a_i\in[1,2]$, provided $c$ is small. We take $E_i=E_i(a_i)$. Every virtual hit is then at distance at least $c/(1+T)$ from $\partial E_i$.

The first return $F$ to $\bigcup_iE_i$ permutes $S$, and $N:=\#S\le C(1+T)$. Its branch and return time at each hit are defined in a disk of radius $c/(1+T)$, with Lipschitz constants at most a fixed $A\ge2$. Indeed, the next hit occurs within a uniformly bounded time. The nearby trajectory has the same intersections with the sections: their inclusion is fixed by the boundary margin, and their order is fixed by the lower return-time bound $\tau$. The implicit function theorem in the fixed flow boxes gives the asserted Lipschitz estimates.

We now assign a finite alphabet to the hits. A letter records the section, a cell of a fine fixed coordinate grid, and a bin for the next return time. The spatial mesh and the width of the time bins are fixed, independently of $T$. Translating these grids puts their boundaries at distance at least $c/(1+T)$ from the finitely many recorded values. Thus the letter is constant in a disk of this radius about every hit.

These itineraries distinguish the points of $S$ when the meshes are sufficiently small. To see this, suppose two hits $z,w$ have the same future itinerary. Their periodicity gives the same bilateral itinerary. Matching consecutive return times by a piecewise affine time change makes the two trajectories uniformly close: the spatial and time errors on each return segment are bounded by a constant times the mesh size. We decrease $\tau$, if necessary, so that $a=\tau/2$ is an allowed choice in the expansivity statement above, and choose the meshes to make these errors smaller than its corresponding $e$. Expansivity gives $w=\varphi_s(z)$ with $|s|<\tau/2$. If $s\ne0$, this would give two distinct intersections of one orbit with the transverse disks in a time interval shorter than $\tau$, contrary to Lemma~\ref{geo:sections}. Hence $s=0$ and $w=z$.

Let $p(j)$ be the number of observed words of length $j$, with $p(0)=1$. We obtain
\begin{equation}
 p(j)\ge\min\{N,j+1\}.
 \label{geo:complexity}
\end{equation}
Indeed, if $p(j+1)=p(j)$, each observed $j$-word has a unique right extension. Since $F$ permutes $S$, successive extensions determine the entire future itinerary, and hence a single hit. Thus equality is possible only when $p(j)=N$; until then, $p(j)$ increases by at least one at every step.

If $N>m$, let $j=\lfloor N/(m+1)\rfloor$. By~\eqref{geo:complexity}, $p(j)>N/(m+1)$, so some word occurs at most $m$ times. Let $\mathcal C$ be its set of occurrences and choose $z\in\mathcal C$. A hit within $c(1+T)^{-1}A^{-j}$ of $z$ has the same first $j$ letters, because its distance from the corresponding iterates of $z$ stays below the boundary margin. It therefore belongs to $\mathcal C$. Since $j\le C(1+T)/m$, this proves~\eqref{geo:sparseball}, after decreasing $c$. When $N\le m$, we take $\mathcal C=S$ and use the boundary margin to choose the ball.

Let us also justify the separation of distinct hits. Put $\rho=c/(1+T)$, with $c$ small enough that every return branch is $A$-Lipschitz and every letter is constant in the radius-$\rho$ disk about a hit. Suppose two hits $z,w$ in the same section satisfy $\dist(z,w)<\rho A^{-N}$. Inductively, they follow the same return branches and
\[
 \dist\bigl(F^j(z),F^j(w)\bigr)\le A^j\dist(z,w)<\rho,\qquad 0\le j<N.
\]
Their first $N$ letters therefore agree. This contradicts $p(N-1)=N$, which says that words of length $N-1$ already distinguish all $N$ hits. Thus
\[
 \dist(z,w)\ge\rho A^{-N}\ge c(1+T)^{-1}e^{-CT},
\]
since $N\le C(1+T)$ and $A$ is fixed. Distinct compact disks $D_i$ have a fixed positive mutual distance, so hits in different disks satisfy the same bound after decreasing $c$. This proves~\eqref{geo:hitsep}.

Finally, consider two distinct oriented periodic orbits and use the adaptive sections for their union. Flow a point on the first orbit to a core in time at most $R$. A sufficiently close point on the second orbit gives, by bounded-time tracking and transverse projection, a distinct hit at distance at most a fixed multiple of the original distance. The boundary margin keeps this hit in the same adaptive section. Estimate~\eqref{geo:hitsep} therefore gives the claimed phase-space separation.
\end{proof}

The last assertion concerns distinct geometric flow orbits. Nearby points on the same orbit are compared only after transverse projection. The two orientations of a closed geodesic give distinct flow orbits and are both included in the next construction.

We will need one local consequence of the geodesic equation. It converts separation in position and direction into a bound on the portion of an arc where the positions can be close.

\begin{lemma}[Short geodesic graphs]
\label{geo:graphs}
On a fixed sufficiently short interval $I$, let two nearby geodesic arcs be written as graphs in a smooth chart, with bounded slopes, and let $b:I\to\mathbb R^{\dim M-1}$ be their difference. Then
\begin{equation}
 b''=U(t)b+V(t)b',\qquad \norm{U}_{C^0}+\norm{V}_{C^0}\le C.
 \label{geo:graphode}
\end{equation}
If $|b(t)|+|b'(t)|\ge q$ on $I$ and $0<a<cq$, then
\begin{equation}
 \bigl|\{t\in I:|b(t)|<a\}\bigr|\le Ca/q.
 \label{geo:sublevel}
\end{equation}
The interval length and the constants are uniform in $\cU$.
\end{lemma}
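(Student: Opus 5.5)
We prove the two assertions separately: the ODE \eqref{geo:graphode} comes from subtracting the geodesic equations and applying Hadamard's lemma, and the sublevel bound \eqref{geo:sublevel} is a one-dimensional argument on each short interval.

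\emph{The ODE.} In the chart, write both arcs as graphs $t\mapsto(t,y_j(t))$, $j=1,2$. Reparametrizing the geodesic equation by the first coordinate $t$ gives a second-order system
\[
 y''=G(t,y,y'),
\]
where $G$ is built from the Christoffel symbols of $g$ and the slopes. Concretely,
\[
 y''=-\Gamma^{\perp}(y')+y'\,\Gamma^{t}(y'),
\]
with the Christoffel symbols quadratic in $(1,y')$. Since $g$ is $C^3$ close to $h$, the Christoffel symbols are $C^1$ with uniform bounds on $\cU$. For bounded slopes, $G$ is therefore $C^1$ in $(y,y')$ with uniform bounds. Subtracting the two equations and using Hadamard's lemma,
\[
 G(t,y_1,y_1')-G(t,y_2,y_2')=\int_0^1 DG\bigl(t,\,y_2+s(y_1-y_2),\,y_2'+s(y_1'-y_2')\bigr)\,ds\cdot(b,b'),
\]
yields $b''=U b+V b'$. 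Here $U$ and $V$ are the averaged partial derivatives $\partial_yG$ and $\partial_{y'}G$, so their sup norms are bounded by the uniform $C^1$ bound on $G$.

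\emph{The sublevel bound.} Put $\Phi=(b,b')$, so that $\Phi'=A(t)\Phi$ with $\norm{A}\le C$. Fix a point $t_0$ with $|b(t_0)|<a$. Since $a<cq$, the hypothesis gives $|b'(t_0)|\ge q-a\ge q/2$. By Gronwall, on $|t-t_0|\le\sigma$ with $\sigma$ a small fixed constant,
\[
 |\Phi(t)-\Phi(t_0)|\le (e^{C\sigma}-1)\,|\Phi(t_0)|\le C\sigma\bigl(|b'(t_0)|+a\bigr).
\]
Hence $|b'(t)-b'(t_0)|\le\tfrac14|b'(t_0)|$ there, once $\sigma$ is small and $a<cq$. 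Let $e=b'(t_0)/|b'(t_0)|$. On this window $\langle b',e\rangle\ge\tfrac34|b'(t_0)|\ge q/4$, so the scalar function $\langle b,e\rangle$ is strictly monotone with derivative at least $q/4$. Consequently the set where $|b|<a$, which is contained in $\{|\langle b,e\rangle|<a\}$, meets the window in an interval of length at most $8a/q$.

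\emph{Counting windows.} We have shown that every point of the sublevel set has a $\sigma$-neighbourhood in which the sublevel set has measure at most $8a/q$. Since $I$ is a fixed short interval, a greedy selection produces at most $|I|/\sigma+1\le C$ such windows covering the sublevel set. This gives $|\{|b|<a\}|\le Ca/q$.

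The main point requiring care is choosing $\sigma$ independently of $q$. This works because Gronwall controls the \emph{relative} error $|\Phi(t)-\Phi(t_0)|/|\Phi(t_0)|$, which the linearity of the equation makes scale-free. Uniformity of $I$ and of the constants over $\cU$ then follows from the uniform $C^1$ bounds on the Christoffel symbols.
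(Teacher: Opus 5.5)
Your proof is correct and follows essentially the same route as the paper: subtract the graph forms of the geodesic equation via the mean value formula, use Gronwall to keep $b'$ within a fixed fraction of $b'(t_0)$, and conclude from the monotonicity of the projection of $b$ onto $b'(t_0)$. The one difference is that the paper shrinks $|I|$ so that a single window covers all of $I$, which makes your covering by $\sigma$-windows unnecessary there, though your version shows that the sublevel bound holds on any fixed interval.
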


\begin{proof}
In coordinates $(t,y)$, a geodesic graph satisfies
\[
 (u^\alpha)''=-\Gamma^\alpha_{ij}(t,u)w^iw^j+(u^\alpha)'\Gamma^1_{ij}(t,u)w^iw^j,\qquad w=(1,u').
\]
Subtracting the two equations and using the mean value formula gives~\eqref{geo:graphode}. Let us write out the resulting comparison. For $s\le t$ in $I$, integration gives
\[
 |b(t)|+|b'(t)|\le |b(s)|+|b'(s)|+C\int_s^t\bigl(|b(v)|+|b'(v)|\bigr)\,dv.
\]
Gronwall's inequality bounds the right side by $e^{C(t-s)}(|b(s)|+|b'(s)|)$. Applying the same argument backwards in time, we obtain
\begin{equation}
 \begin{aligned}
 e^{-C|t-s|}\bigl(|b(s)|+|b'(s)|\bigr)
 &\le |b(t)|+|b'(t)|\\
 &\le e^{C|t-s|}\bigl(|b(s)|+|b'(s)|\bigr),\qquad s,t\in I.
 \end{aligned}
 \label{geo:gronwall}
\end{equation}
Since $I$ has fixed length, the comparison factor is uniform.

If the set in~\eqref{geo:sublevel} is nonempty, let $t_0$ belong to it. Then $|b(t_0)|<a<cq$ and $|b'(t_0)|\ge q-a$. In particular, for $c\le1/2$, we have $|b(t_0)|+|b'(t_0)|\le2|b'(t_0)|$. Integrating~\eqref{geo:graphode} and using~\eqref{geo:gronwall}, we get
\[
 \sup_{t\in I}|b'(t)-b'(t_0)|
 \le 2C|I|e^{C|I|}|b'(t_0)|
 \le\tfrac12|b'(t_0)|,
\]
where the last inequality follows by fixing $|I|$ sufficiently small.
Consequently the projection of $b(t)$ in the direction $b'(t_0)$ is strictly increasing, with derivative at least $|b'(t_0)|/2$. The preimage of $[-a,a]$ has length at most $4a/|b'(t_0)|\le Ca/q$, as required.
\end{proof}

We use the conformal perturbation in~\cite[Section~3.2]{KWX},
\begin{equation}
 g_s=(1+sf)^{-1}g,\qquad p_{g_s}=(1+sf)p_g,
 \label{geo:conformal}
\end{equation}
where $p_g(x,\xi)=g^{ij}(x)\xi_i\xi_j$ and $|s|\norm{f}_{C^0}<1$. The function $f$ will be chosen to separate two lengths.

\begin{proposition}[A separating metric variation]
\label{geo:detector}
Let $k\ge3$, $\norm{g}_{C^k}\le B$, and let $\gamma\ne\eta$ be closed geodesics, with repetitions allowed, of lengths at most $T\ge1$. For every integer $m\ge1$, there is $f\in C^k(M)$ such that
\begin{equation}
 \norm{f}_{C^k}\le1,\qquad \left|\left.\frac{d}{ds}\bigl(\ell_{g_s}(\gamma_s)-\ell_{g_s}(\eta_s)\bigr)\right|_{s=0}\right|\ge c_{k,m,B}e^{-C(m+k/m)T}.
 \label{geo:detectorbound}
\end{equation}
Here $g_s$ is given by~\eqref{geo:conformal}, and $\gamma_s,\eta_s$ are the geodesics in the corresponding classes. If their geometric images are distinct, $f$ can be chosen nonnegative, supported in one coordinate chart, with $f=df=0$ on one of them.
\end{proposition}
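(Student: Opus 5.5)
The plan is to reduce to the case of distinct geometric images, construct $f$ in local graph coordinates around an ``isolated'' passage of one geodesic given by Lemma~\ref{geo:cluster}, and estimate the first variation directly.

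\textbf{Step 1: the first variation.} For the conformal family~\eqref{geo:conformal}, a closed geodesic is a critical point of length in its free homotopy class. Hence
\[
 \left.\frac{d}{ds}\ell_{g_s}(\gamma_s)\right|_{s=0}=-\tfrac12\int_\gamma f\,ds_g ,
\]
with repetitions counted with multiplicity. The case of equal geometric images is handled first: if $\gamma=\eta_0^{p}$ and $\eta=\eta_0^{q}$ with $p\ne q$, the derivative of the difference is $-\tfrac12(p-q)\int_{\eta_0}f$. In this case I take $f\ge0$ to be a fixed bump times $\chi_r$ with $r\sim e^{-CT}$, so that $\|f\|_{C^k}\le1$ after paying $r^{k}$. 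This is much better than needed, because $\eta_0$ alone has separation $e^{-CT}$ from itself by~\eqref{geo:hitsep}.

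\textbf{Step 2: localization for distinct images.} Apply Lemma~\ref{geo:cluster} to $\Gamma$, the union of the four oriented flow orbits of $\gamma,\eta$. This gives a hit $z$ with at most $m$ hits of $\Gamma$ in $B_{E_i}(z,\delta)$, where $\delta=c(1+T)^{-1}e^{-CT/m}$. Say $z$ lies on $\gamma$; otherwise swap roles, since only the absolute value matters.

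Flow a short fixed time through the flow box and write the $\gamma$-arc near $z$ as a graph $t\mapsto(t,u_0(t))$ over a short interval $I$ as in Lemma~\ref{geo:graphs}. Hits outside $\mathcal C$ are at transverse distance $\ge\delta$ in phase space. By Lemma~\ref{geo:graphs} together with the lower bound in~\eqref{geo:gronwall}, their arcs have $|b|+|b'|\ge c\delta$ on $I$. So outside a subset of $I$ of length $\le Ca/\delta$, they stay at distance $\ge a$ in position from $u_0$. The passages in $\mathcal C\setminus\{z\}$ belonging to $\eta$ (and possibly to $\gamma$ with another time) are the graphs $u_1,\dots,u_q$, with $q\le m-1$.

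\textbf{Step 3: the test function.} Choose a subinterval $J\subset I$ of length $\sim1$ and a tube radius $r=c\delta$, and set
\[
 f(t,y)=\lambda\,\chi\bigl(t\bigr)\,\chi_r\bigl(y-u_{0}(t)\bigr)\prod_{i\in\mathcal I_\eta}|y-u_i(t)|^2 .
\]
Here the product is taken only over the passages of $\eta$ (the one whose length we want fixed). Orient the roles so that $f=df=0$ on $\eta$: every $\eta$-passage through the support either is one of the $u_i$, where the quadratic factor vanishes to second order, or lies outside the tube by Step~2 (after shrinking $J$ to avoid the exceptional set, which has length $\le C r/\delta$, small). Then $\eta$ remains a geodesic of $g_s$ to first order, so its length derivative is $0$, while the derivative for $\gamma$ is $-\tfrac12\int_\gamma f$. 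Nonnegativity is automatic.

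\textbf{Step 4: lower bound.} Along $u_0$ on $J$, the product is at least $\prod|u_0-u_i|^2$. The extreme neighbours $u_i$ could come close to $u_0$ at some times. The phase separation~\eqref{geo:hitsep}, $|b|+|b'|\ge c e^{-CT}$, and~\eqref{geo:sublevel} show that each factor exceeds $e^{-2CT}$ outside a set of times of measure $\le 1/(4m)$. Hence on a set of measure $\ge|J|/2$ the product is $\ge e^{-CmT}$. The $C^k$ norm of the unnormalized function is $\le C_{k,m,B}r^{-k}\le C_{k,m,B}e^{CkT/m}$, since the product and its derivatives are bounded. Therefore $\lambda\sim e^{-CkT/m}$, and the integral is $\ge c_{k,m,B}e^{-C(m+k/m)T}$, which gives~\eqref{geo:detectorbound}.

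\textbf{Main obstacle.} I expect the hardest step to be Step~4 combined with the vanishing argument. One has to ensure the product stays quantitatively large along $\gamma$ despite the near-tangency of up to $m-1$ passages. This is where~\eqref{geo:sublevel} with $q=e^{-CT}$ must be applied uniformly. One must also check that $\gamma$'s own repeated passages through the tube do not cancel; they cannot, since $f\ge0$.
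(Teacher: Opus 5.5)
Your strategy is the paper's: Lemma~\ref{geo:cluster} on the four oriented orbits, graph coordinates along the isolated arc of $\gamma$, the squared-distance product over at most $m-1$ selected passages of $\eta$, and a cutoff at a scale comparable to $\delta$. These cost $m$ and $k/m$ in the exponent. The only structural difference is that the paper localizes in a ball of radius $2r$ about one good point $x(t_*)$ rather than in a tube over $J$. However, two steps fail as written.

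First, the tube radius $r=c\delta$ is too large. The bound $Cr/\delta$ you quote controls the exceptional set of \emph{one} far arc. Lemma~\ref{geo:cluster} only says that hits outside $\mathcal C$ lie at distance at least $\delta$ from $z$. As many as $C(1+T)$ of them may lie at distance comparable to $\delta$, and each may come within $2r$ of $u_0$ somewhere in $I$ while $|b'|\approx c\delta$. The only available bound on the union of exceptional sets is then $C(1+T)c$, which exceeds $|I|$ for large $T$, so nothing guarantees a good $J$. This is exactly why the paper takes $r=c\delta/(1+T)$ in \eqref{geo:radius}; the extra factor $(1+T)^{k}$ in the $C^k$ norm is harmless.

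Because your tube needs its entire $t$-support to avoid these sets, while the paper's ball needs only a single point $t_*$, you must also use that each exceptional set lies in one interval. The proof of Lemma~\ref{geo:graphs} gives this. Then a gap $J$ of length $\gtrsim(1+T)^{-1}$ exists, but not one of length $\sim1$, and the cutoff in $t$ costs another power of $1+T$. You also need to exclude passages of $\eta$ crossing $x(J)$ at a definite angle. These are not graphs over $I$, so Step~2 does not cover them; each meets the tube over a $t$-interval of length at most $Cr$. Likewise, in Step~4, \eqref{geo:sublevel} requires the threshold to be a small multiple of the separation $\sigma=c(1+T)^{-1}e^{-CT}$, divided by $m$ (the paper's $a=c\sigma/m$), not $e^{-CT}$ itself.

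Second, in the case of equal geometric images your estimate goes the wrong way. A bump of radius $r\sim e^{-CT}$ normalized in $C^k$ only gives $\int_{\eta_0}f\gtrsim r^{k+1}\approx e^{-C(k+1)T}$. This is worse, not better, than $c_{k,m,B}e^{-C(m+k/m)T}$ with $C$ independent of $k$ and $m$; take $m=\lceil\sqrt k\rceil$ and $k$ large. No localization is needed here. The derivative is $-\tfrac12(p-q)\int_{\eta_0}f$ with $p\ne q$, so any fixed $f\ge0$ with $\int_{\eta_0}f$ bounded below works. The paper takes $f$ to be a positive constant and uses the uniform systole bound \eqref{geo:systole}. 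With these corrections your argument goes through: all additional losses are powers of $1+T$, which are absorbed into the exponent as in the paper.
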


\begin{proof}
Suppose first that the geometric geodesics are distinct. Lemma~\ref{geo:cluster}, applied to both orientations of both geodesics, gives a hit $z$ with at most $m$ extreme neighbours at scale $\delta$. After interchanging $\gamma,\eta$ and reversing parameters if necessary, we may assume that $z$ belongs to $\gamma$.

In a fixed smooth chart, followed by a bounded linear coordinate change, write a short arc of $\gamma$ as $x(t)=(t,u_0(t))$, $t\in I$, with $x(0)=\pi(z)$. Here $\pi:S_hM\to M$ is the bundle projection, and $I$ has a fixed small length. Initial data $(u(0),u'(0))$ of nearby geodesic graphs are uniformly bi-Lipschitz equivalent to their hits in the transverse disk containing $z$. Consequently, at most $m-1$ distinct graphs of $\eta$ have initial data within $c\delta$ of those of $u_0$: each gives an extreme neighbour other than $z$. Denote them by $u_1,\ldots,u_q$, where $q\le m-1$. We count coincident passages only once. Including both orientations accounts for passages whose original direction is opposite to that of $\gamma$.

Let us find a point of the main arc away from all other passages of $\eta$. Divide $\eta$ into at most $C(1+T)$ short arcs, with overlapping extensions. An arc approaching $x(I)$ with a small angle in either orientation extends as a graph over $I$. If it is not one of the selected graphs, its initial-data distance is at least $c\delta$. By~\eqref{geo:gronwall},
\[
 |u(t)-u_0(t)|+|u'(t)-u_0'(t)|\ge c\delta,\qquad t\in I.
\]
Lemma~\ref{geo:graphs} then bounds the portion of $I$ at base distance less than $4r$ from this arc by $Cr/\delta$. An arc making a fixed positive angle has the stronger bound $Cr$. To see this, parametrize it by $g$-arclength as $(t(s),y(s))$ and put $w(s)=y(s)-u_0(t(s))$. At a point close to the main arc, the angle assumption gives $|w'(s_0)|\ge c$. With $e=w'(s_0)/|w'(s_0)|$, the uniform bounds on second derivatives imply $\langle w'(s),e\rangle\ge c/2$ on the sufficiently short arc. Points at distance at most $4r$ from $x(I)$ satisfy $|w(s)|\le Cr$. Therefore their parameters belong to the interval where $|\langle w(s),e\rangle|\le Cr$, whose length is at most $4Cr/c$. This is precisely a bound on the arclength of the portion near $x(I)$. Since $|t'(s)|\le C$, its projection onto $I$, and the $Cr$-neighborhood of that projection, also have length at most $Cr$.

We call these unselected passages the far arcs and choose
\begin{equation}
 r=c\delta/(1+T).
 \label{geo:radius}
\end{equation}
The part of $I$ within distance $4r$ of the far arcs has total length at most $C(1+T)r/\delta$. Choosing $c$ small makes this less than $|I|/8$.

We must also separate the positions of the selected graphs. Put $\sigma=c(1+T)^{-1}e^{-CT}$. The phase-space separation in Lemma~\ref{geo:cluster} gives $|u_i-u_0|+|u_i'-u_0'|\ge c\sigma$ on $I$. With $a=c\sigma/m$, Lemma~\ref{geo:graphs} shows that the union of the sets $\{|u_i-u_0|<a\}$ has length at most $Cma/\sigma<|I|/8$, after decreasing $c$. Hence the middle half of $I$ contains $t_*$ outside both exceptional sets. The ball of radius $2r$ about $x(t_*)$ misses all far arcs, and
\begin{equation}
 |u_i(t_*)-u_0(t_*)|\ge a,\qquad 1\le i\le q.
 \label{geo:goodpoint}
\end{equation}

Choose a nonnegative smooth bump $\chi_r$, equal to one on the ball of radius $r$ about $x(t_*)$ and supported in the ball of radius $2r$. In the chart, put
\begin{equation}
 F(t,y)=\chi_r(t,y)\prod_{i=1}^q|y-u_i(t)|^2,
 \label{geo:product}
\end{equation}
and extend it by zero. The empty product is one. Every passage of $\eta$ through the support lies on a selected graph. The squared factors therefore give $F|_\eta=0$ and $dF|_\eta=0$.

The graph equation bounds $\norm{u_i}_{C^k}$ by a constant depending only on $k,B$: successive differentiation uses derivatives of the metric of order at most $k-1$. Since the coordinates were fixed smooth charts, the construction loses no regularity. The product rule gives
\begin{equation}
 \norm{F}_{C^k}\le C_{k,m,B}r^{-k}.
 \label{geo:productnorm}
\end{equation}
Furthermore, the first derivatives of all graphs have a uniform bound. Thus~\eqref{geo:goodpoint}, with $a/2$ in place of $a$, holds on an interval of length $c\min\{r,a\}$ about $t_*$. On this interval $\chi_r=1$, so
\begin{equation}
 \int_\gamma F\,ds_g\ge c_m\min\{r,a\}a^{2q}.
 \label{geo:positiveintegral}
\end{equation}
We normalize by setting $f=F/(C_{k,m,B}r^{-k})$, with the denominator from~\eqref{geo:productnorm}.

To compute the length derivative, recall the first-variation formula
\begin{equation}
 \left.\frac{d}{ds}\ell_{g_s}(\gamma_s)\right|_{s=0}=\frac12\int_\gamma \left.\partial_sg_s\right|_{s=0}(\dot\gamma,\dot\gamma)\,ds_g,
 \label{geo:firstvariation}
\end{equation}
where $\gamma$ is parametrized by $g$-arclength. The curve-variation term vanishes because $\gamma$ is a closed geodesic. Negative curvature makes the normal periodic Jacobi operator invertible, giving the local continuation in the metric; tangential changes only reparametrize the curve.

For~\eqref{geo:conformal}, the derivative is $-\tfrac12\int_\gamma f\,ds_g$. Also, $f=df=0$ on $\eta$, so the first jets of $g_s$ and $g$ agree there. The curve $\eta$ remains a geodesic of the same length. It follows from~\eqref{geo:positiveintegral} that the absolute derivative of the difference is at least
\[
 c_{k,m,B}r^k\min\{r,a\}a^{2q}.
\]
By~\eqref{geo:sparseball} and~\eqref{geo:radius},
\[
 r\ge c(1+T)^{-2}e^{-CT/m},\qquad a\ge c_m(1+T)^{-1}e^{-CT},\qquad \min\{r,a\}\ge c_m(1+T)^{-2}e^{-CT}.
\]
Since $q\le m-1$, the resulting lower bound is
\[
 c_{k,m,B}(1+T)^{-2k-2m}\exp\left[-C\left(\frac{k}{m}+2m\right)T\right].
\]
The polynomial factor has the required exponential cost as well. Indeed, $\log(1+T)\le T/m+\log(1+m)$ gives
\[
 (1+T)^{-2k}\ge(1+m)^{-2k}e^{-2kT/m},\qquad (1+T)^{-2m}\ge e^{-2mT}.
\]
Absorbing $(1+m)^{-2k}$ into the prefactor proves~\eqref{geo:detectorbound}. The same bound holds for repeated traversals. Indeed, if $\gamma$ traverses its geometric geodesic $\gamma_0$ exactly $p$ times, then
\[
 \int_\gamma f\,ds_g=p\int_{\gamma_0}f\,ds_g\ge\int_{\gamma_0}f\,ds_g,
\]
since $p\ge1$ and $f\ge0$. The integral along $\eta$ remains zero, regardless of its traversal number. Thus repetitions can only increase the nonzero first variation.

It remains to consider different traversal numbers of the same geometric geodesic $\gamma_0$. We take $f$ to be a fixed positive constant with $\norm{f}_{C^k}\le1$. If the traversal numbers are $p\ne q$, then
\[
 \ell_{g_s}(\gamma_s)-\ell_{g_s}(\eta_s)=(p-q)(1+sf)^{-1/2}\ell_g(\gamma_0),
\]
so the absolute derivative at zero is $\tfrac12 f|p-q|\ell_g(\gamma_0)\ge\tfrac12 f\ell_g(\gamma_0)$.

Here we use the \emph{systole} $\operatorname{sys}(g)$, the length of a shortest nontrivial closed geodesic of $g$. It is bounded below uniformly on $\cU$. Indeed, after shrinking $\cU$, there is $c_0>0$ such that $g\ge c_0h$ as quadratic forms. Every curve then has $g$-length at least $\sqrt{c_0}$ times its $h$-length. Minimizing in each free homotopy class and then over the classes gives
\begin{equation}
 \ell_g(\alpha)\ge\sqrt{c_0}\ell_h(\alpha),\qquad
 \operatorname{sys}(g)\ge\sqrt{c_0}\operatorname{sys}(h)>0.
 \label{geo:systole}
\end{equation}
The last inequality follows from the positive injectivity radius of the fixed compact manifold $(M,h)$. Thus the derivative above is at least $\tfrac12 f\sqrt{c_0}\operatorname{sys}(h)$, proving the remaining case.
\end{proof}

\begin{corollary}
\label{geo:sqrt}
For every $k\ge3$, the variation in Proposition~\ref{geo:detector} can be chosen with
\[
 \norm{f}_{C^k}\le1,\qquad \left|\left.\frac{d}{ds}\bigl(\ell_{g_s}(\gamma_s)-\ell_{g_s}(\eta_s)\bigr)\right|_{s=0}\right|\ge c_{k,B}e^{-C\sqrt{k}\,T}.
\]
\end{corollary}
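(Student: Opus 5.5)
We apply Proposition~\ref{geo:detector} with the choice $m=\lceil\sqrt{k}\rceil$ and check that the exponent $C(m+k/m)$ is at most $C'\sqrt{k}$ for a constant $C'$ independent of $k$. Everything else is already contained in the proposition. The only point to watch is that the prefactor $c_{k,m,B}$ becomes a constant depending on $k$ and $B$ alone, and that the constant $C$ in the exponent does not depend on $k$ or $m$. The latter is built into the convention stated at the beginning of Section~\ref{geo:section}: constants without subscripts depend only on $\cU$ and the atlas.

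Concretely, since $k\ge3$ we have $\sqrt{k}\le m=\lceil\sqrt{k}\rceil\le\sqrt{k}+1\le2\sqrt{k}$. Hence $k/m\le k/\sqrt{k}=\sqrt{k}$, and
\[
 m+\frac{k}{m}\le 2\sqrt{k}+\sqrt{k}=3\sqrt{k}.
\]
Proposition~\ref{geo:detector} then gives a function $f$ with $\norm{f}_{C^k}\le1$ and derivative bound
\[
 c_{k,\lceil\sqrt{k}\rceil,B}\,e^{-C(m+k/m)T}\ge c_{k,B}\,e^{-3C\sqrt{k}\,T}.
\]
Here $c_{k,B}:=c_{k,\lceil\sqrt{k}\rceil,B}$ depends only on $k$ and $B$, because $m$ is a function of $k$. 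Renaming $3C$ as $C$ gives the stated bound. The degenerate case of the proposition, with the same geometric geodesic traversed different numbers of times, gives a lower bound $\tfrac12 f\sqrt{c_0}\operatorname{sys}(h)$ independent of $T$, so it trivially satisfies the corollary as well.

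I expect no real obstacle. The one thing to verify is that in the proof of Proposition~\ref{geo:detector} the exponential constants never depend on $m$. These constants come from Lemma~\ref{geo:cluster}, where $\delta=c(1+T)^{-1}e^{-CT/m}$, and from the bound $a\ge c_m(1+T)^{-1}e^{-CT}$. The factor $a^{2q}$ with $q\le m-1$ contributes $e^{-2CmT}$, which is linear in $m$ with an $m$-independent coefficient. The only $m$-dependence in the absorption of polynomial factors is the term $(1+m)^{-2k}$, which goes into the prefactor. With $m=\lceil\sqrt{k}\rceil$ this term is a function of $k$ alone, so the structure of the bound is preserved.
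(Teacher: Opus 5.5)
Your proposal is correct and matches the paper's proof: the paper also takes $m=\lceil\sqrt{k}\rceil$ in Proposition~\ref{geo:detector}, noting $m+k/m\le 2\sqrt{k}+1$ (which is at most your $3\sqrt{k}$ since $k\ge1$), so the prefactor $c_{k,m,B}$ becomes a constant depending only on $k$ and $B$. Your extra check that the exponent constant $C$ does not depend on $m$ is a worthwhile sanity check that the paper leaves implicit.
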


\begin{proof}
The choice $m=\lceil\sqrt{k}\rceil$ gives $m+k/m\le2\sqrt{k}+1$ in Proposition~\ref{geo:detector}.
\end{proof}

\begin{remark}
Local conformal perturbations also occur in Schenck's construction~\cite{Schenck}. Here, the product in~\eqref{geo:product} removes at most $m-1$ nearby passages exactly, while the cutoff is placed at the larger scale $e^{-CT/m}$. Their respective costs are $m$ and $k/m$ in the exponent. This is the geometric counterpart of the product and localization used in~\cite[Appendix~A, proof of Proposition~A.3]{DK}.
\end{remark}

\section{Atlas of local Fourier perturbations}
\label{prob:section}

We now pass from the perturbation in Proposition~\ref{geo:detector} to a fixed probability measure on the space of metrics. Throughout this section, $n=\dim M$, $k\ge3$, and all metrics belong to the fixed $C^3$-neighborhood of $h$. We restrict to bounded $C^k$-families when a uniform high-order bound is needed. Constants depending on that bound may enter the prefactors, but not the coefficient of $\sqrt{k}\,T$ in the exponent.

First, let us express the separating direction as a scalar multiple of the fixed metric $h$. Consider two distinct geometric closed geodesics, with repetitions allowed, whose lengths are at most $T$. Proposition~\ref{geo:detector} supplies, after interchanging the pair if necessary, a nonnegative function $f$ vanishing on $\eta$, with $\norm{f}_{C^k}\le1$. Since $h(\dot\gamma,\dot\gamma)$ has a uniform positive lower bound for $g$-unit vectors, its proof gives
\begin{equation}
 \left|\frac12\int_\gamma f h(\dot\gamma,\dot\gamma)\,ds_g-\frac12\int_\eta f h(\dot\eta,\dot\eta)\,ds_g\right|\ge a_{k,m}\exp\left[-C_h\left(m+\frac{k}{m}\right)T\right].
 \label{prob:detector}
\end{equation}
The prefactor is uniform on bounded $C^k$-families. Indeed, the second integral vanishes, and the first is bounded below by a fixed positive multiple of $\frac12\int_\gamma f\,ds_g$. Thus the directions $fh$ separate the lengths at every metric in the neighborhood, including after arbitrary tensor translations.

\begin{lemma}[A fixed finite Fourier approximation]
\label{prob:jackson}
There is a fixed countable collection of smooth real functions $e_j$ on $M$, with frequencies $\nu_j\ge1$, such that
\begin{equation}
 \#\{j:\nu_j\le N\}\le C N^n,\qquad \norm{e_j}_{C^a}\le C_a\nu_j^a.
 \label{prob:modes}
\end{equation}
For every $f\in C^k(M)$ and $N\ge2$, there is a finite combination $f_N=\sum_{\nu_j\le CN}b_j e_j$ satisfying
\begin{equation}
 |b_j|\le C\norm{f}_{C^0},\qquad \norm{f-f_N}_{C^0}\le C_k N^{-k}\norm{f}_{C^k}.
 \label{prob:jackson-estimate}
\end{equation}
\end{lemma}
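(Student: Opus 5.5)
We build the collection from a finite atlas and a partition of unity. Fix a finite smooth atlas $\{(V_\alpha,\psi_\alpha)\}$ of $M$, with $\psi_\alpha(V_\alpha)\supset[-2,2]^n$ after rescaling, such that the cubes $\psi_\alpha^{-1}((-1,1)^n)$ cover $M$. Choose a smooth partition of unity $\{\rho_\alpha\}$ subordinate to these cubes, and cutoffs $\theta_\alpha\in C_c^\infty(\psi_\alpha^{-1}((-2,2)^n))$ equal to one on $\operatorname{supp}\rho_\alpha$. The modes will be
\[
 e_{\alpha,\kappa,\pm}(x)=\theta_\alpha(x)\,\mathrm{cs}_\pm\bigl(\pi\kappa\cdot\psi_\alpha(x)/4\bigr),\qquad \kappa\in\mathbb Z^n,
\]
with $\mathrm{cs}_+=\cos$, $\mathrm{cs}_-=\sin$, and frequency $\nu=\max\{1,|\kappa|\}$. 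The counting bound in~\eqref{prob:modes} is immediate: there are finitely many charts and at most $CN^n$ lattice points with $|\kappa|\le N$. The derivative bound $\norm{e_j}_{C^a}\le C_a\nu_j^a$ follows from the Leibniz rule, because each derivative of the trigonometric factor costs one power of $|\kappa|$, while the derivatives of $\theta_\alpha$ and $\psi_\alpha$ are fixed constants.

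For the approximation, write $f=\sum_\alpha\rho_\alpha f$ and work in one chart. Put $F_\alpha=(\rho_\alpha f)\circ\psi_\alpha^{-1}$. This is a $C^k$ function supported in $(-1,1)^n$ with $\norm{F_\alpha}_{C^k}\le C_k\norm{f}_{C^k}$ and $\norm{F_\alpha}_{C^0}\le\norm{f}_{C^0}$. We regard it as a $C^k$ function on the torus $(\mathbb R/8\mathbb Z)^n$. We then apply a Jackson-type operator: convolution with a de la Vall\'ee Poussin kernel $V_N$, the tensor product of one-dimensional de la Vall\'ee Poussin kernels. This kernel has two key properties. First, its $L^1$ norm is uniformly bounded. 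Second, it reproduces every trigonometric polynomial of degree at most $N$, and its own spectrum lies in $|\kappa|\le CN$. From these two properties:
\begin{itemize}
\item Boundedness of $\norm{V_N}_{L^1}$ combined with reproduction of degree-$N$ polynomials gives the Jackson estimate $\norm{F_\alpha-V_N*F_\alpha}_{C^0}\le C_kN^{-k}\norm{F_\alpha}_{C^k}$, since $\norm{F-V_N*F}\le(1+\norm{V_N}_{L^1})E_N(F)$ and the classical multivariate Jackson theorem gives $E_N(F)\le C_kN^{-k}\norm{F}_{C^k}$.
\item The coefficients are $\hat V_N(\kappa)\hat F_\alpha(\kappa)$, where $|\hat V_N|\le1$ and $|\hat F_\alpha(\kappa)|\le\norm{F_\alpha}_{C^0}$. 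Hence $|b_j|\le C\norm{f}_{C^0}$, after splitting into real cosine and sine parts.
\end{itemize}

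Pulling back by $\psi_\alpha$ and multiplying by $\theta_\alpha$ gives the finite combination $\sum b_je_j$ for this chart. Multiplication by $\theta_\alpha$ does not change the error on $\operatorname{supp}\rho_\alpha$, where $\theta_\alpha=1$. The error elsewhere needs a bound as well. Outside $\operatorname{supp}\rho_\alpha$ the function $F_\alpha$ vanishes, but $V_N*F_\alpha$ need not. Still, $\theta_\alpha(V_N*F_\alpha-F_\alpha)$ is bounded in $C^0$ by the same torus error, since $|\theta_\alpha|\le1$. So the chartwise errors are bounded everywhere. Summing over the finitely many charts, we take $f_N=\sum_\alpha\theta_\alpha\cdot(V_N*F_\alpha)\circ\psi_\alpha$. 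This satisfies~\eqref{prob:jackson-estimate}, with the constant $C$ in $\nu_j\le CN$ absorbing the de la Vall\'ee Poussin support doubling.

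The main obstacle is keeping the coefficient bound in terms of $\norm{f}_{C^0}$ alone while obtaining the $N^{-k}$ rate. A plain truncated Fourier series has an unbounded Lebesgue constant, which would cost $\log N$ factors or rule out the $C^0$ coefficient control. This is why we use a kernel that is bounded in $L^1$ and reproduces low frequencies, rather than truncation. The rest of the argument, namely the uniqueness of the collection, the fact that it is chosen independently of the metric, and the Leibniz bounds, is routine.
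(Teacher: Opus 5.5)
Your proof is correct, and its architecture is the same as the paper's. You use a finite atlas with partition of unity, cutoffs equal to one on $\operatorname{supp}\rho_\alpha$, and transplant $\rho_\alpha f$ to a flat torus. There a Fourier multiplier of modulus at most one gives $|b_j|\le C\|f\|_{C^0}$ directly from $|\widehat F_\alpha(\kappa)|\le\|F_\alpha\|_{C^0}$. The chartwise errors combine through $f-f_N=\sum_\alpha\theta_\alpha\,(F_\alpha-V_N*F_\alpha)\circ\psi_\alpha$.

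The genuine difference is how you obtain the $N^{-k}$ rate.

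\emph{The paper's route.} It takes a smooth compactly supported multiplier $\Phi(q/N)$ with $\Phi=1$ near zero. Its kernel $K$ is Schwartz, with $\int x^\alpha K\,dx=0$ for all $|\alpha|\ge1$ and $\int|K(w)|\,|w|^k\,dw<\infty$. Taylor's formula of order $k-1$ then gives the rate in one line, with no external input.

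\emph{Your route.} The de la Vall\'ee Poussin multiplier is only piecewise linear, so the rescaled kernel $N\,v(Nx)$ decays like $|x|^{-2}$. The Taylor-remainder integral $\int N|v(Nx)|\,|x|^k\,dx$ is then only of order $N^{-1}$ for $k\ge2$, so the paper's direct argument is not available for your kernel. That is why you must pass through the near-best inequality $\|F-V_N*F\|\le(1+\|V_N\|_{L^1})E_N(F)$ and cite the classical multivariate Jackson theorem. This is valid and standard; just make sure the reproduced class (degree at most $N$ in each variable, for the tensor-product kernel) matches the class in the Jackson estimate you cite.

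\emph{Comparison.} The paper's approach is self-contained. Its kernel is itself a smooth de la Vall\'ee Poussin-type mean: it has bounded $L^1$ norm and reproduces frequencies $|q|\le cN$. So it enjoys your near-best property as well, while also admitting the elementary moment proof.
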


\begin{proof}
We first prove the approximation estimate on a flat torus. Choose an even function $\Phi\in C_c^\infty(\mathbb R^n)$ equal to one near zero, with $0\le\Phi\le1$, and put
\[
 K(x)=(2\pi)^{-n}\int_{\mathbb R^n}e^{i x\cdot\xi}\Phi(\xi)\,d\xi.
\]
Repeated integration by parts shows that $K$ and its derivatives decay faster than every inverse power of $|x|$. Fourier inversion gives $\int K(x)\,dx=1$ and $\int x^\alpha K(x)\,dx=0$ for every multi-index with $|\alpha|\ge1$, because all positive-order derivatives of $\Phi$ vanish at zero. These are the standard Fourier facts used below; see~\cite[Chapter~I, Section~1]{Stein}.

For a $2\pi$-periodic function $u$, define
\[
 S_Nu(x)=\int_{\mathbb R^n}N^nK(Nz)u(x-z)\,dz=\sum_{q\in\mathbb Z^n}\Phi(q/N)\widehat u(q)e^{i q\cdot x}.
\]
The sum is finite. Taylor's formula to order $k-1$ and the moment identities cancel every term except the remainder. Consequently,
\[
 |S_Nu(x)-u(x)|\le C_k\norm{u}_{C^k}\int N^n|K(Nz)|\,|z|^k\,dz\le C_kN^{-k}\norm{u}_{C^k}.
\]
Also, $|\Phi(q/N)\widehat u(q)|\le\norm{u}_{C^0}$. The smooth frequency cutoff therefore gives the full approximation rate $N^{-k}$, without losing derivatives.

We now choose a finite smooth atlas, a subordinate partition of unity $\rho_i$, and cutoffs $\chi_i$ supported in the corresponding charts and equal to one on $\operatorname{supp}\rho_i$. After rescaling the coordinates, each $\rho_i f$, extended by zero, is a $C^k$ function on a flat torus. Let $u_i$ denote this extension and set $f_N=\sum_i\chi_i S_Nu_i$ in the charts. Since $\sum_i\chi_i u_i=f$, the preceding estimate gives the required uniform error. The fixed modes are $\chi_i\cos(q\cdot x)$ and $\chi_i\sin(q\cdot x)$, including the modes $\chi_i$ for $q=0$. With $\nu_j=1+|q|$, counting lattice points and differentiating these functions proves~\eqref{prob:modes}; the Fourier coefficient bound proves the other assertion in~\eqref{prob:jackson-estimate}. All charts, cutoffs, and modes are chosen before the metric and the geodesic pair.
\end{proof}

The modes depend on the atlas and cutoffs, and no orthogonality is needed. Their role is the same as that of the fixed circle modes in~\cite[Appendix A, Proposition~A.3]{DK}: a perturbation adapted to the two orbits will be replaced by one direction from a list fixed in advance.

Let $\xi_j$ be independent random variables uniformly distributed on $[-1,1]$. For $0<\eps\le\eps_0$, put
\begin{equation}
 u_\xi=\eps\sum_j\nu_j^{-(k+n+2)}\xi_j e_j,\qquad v_\xi=\bigl((1+u_\xi)^{-1}-1\bigr)h.
 \label{prob:probe}
\end{equation}
In particular, $h+v_\xi=(1+u_\xi)^{-1}h$ and $p_{h+v_\xi}=(1+u_\xi)p_h$, as in~\eqref{geo:conformal}. This also holds conditionally in each coordinate. Indeed, writing $\widehat u=\eps\sum_{i\ne j}\nu_i^{-(k+n+2)}\xi_i e_i$ and $H=(1+\widehat u)^{-1}h$, we obtain
\[
 h+v_{(\widehat\xi,t)}=\left(1+t\frac{\eps\nu_j^{-(k+n+2)}e_j}{1+\widehat u}\right)^{-1}H.
\]
The varying scalar factor is supported where $e_j$ is supported.

Let us verify that~\eqref{prob:probe} defines a compactly supported measure in $C^k(S^2T^*M)$. By~\eqref{prob:modes}, its scalar series satisfies
\[
 \sup_\xi\norm{u_\xi}_{C^k}\le C_k\eps\sum_j\nu_j^{-n-2}<\infty,\qquad \sup_\xi\left\|\eps\sum_{\nu_j>N}\nu_j^{-(k+n+2)}\xi_j e_j\right\|_{C^k}\longrightarrow0.
\]
The summability follows, for example, by grouping frequencies in the intervals $[2^l,2^{l+1})$: the corresponding sum is at most $C2^{-2l}$. Finite partial sums are continuous functions of finitely many parameters. The uniform tail bound makes the full series a continuous map from the compact product $[-1,1]^{\mathbb N}$ into $C^k(M)$, so its image is compact. The $C^0$ bound is uniform in $k\ge3$; hence $\eps_0>0$ can be chosen independently of $k$ so that $\norm{u_\xi}_{C^0}\le1/2$. Continuity of $u\mapsto((1+u)^{-1}-1)h$ now proves compactness of the support of the law of $v_\xi$. Decreasing $\eps$ places this support in any prescribed $C^k$-ball about zero.

\begin{remark}
Random perturbations parametrized by Hilbert bricks were used by Kaloshin--Hunt~\cite[Sections~1.3 and~2.3]{KH} to prove prevalent stretched exponential bounds for the number of periodic points of diffeomorphisms. Here, the summable Fourier weights provide a compact parameter image in finite regularity, as in the circle construction of~\cite[Appendix A.3]{DK}.
\end{remark}

For prevalence we use the same law after every additive tensor translation. The resulting family is $a+v_\xi$, which need not be conformal to $a$. This causes no difficulty: the definition of prevalence requires one fixed measure, not an affine parametrization of its support.

\begin{lemma}[Second variation along a metric path]
\label{prob:second-variation}
Let $g_t$ be a twice differentiable path of $C^k$ metrics in the fixed geometric neighborhood. Then
\begin{equation}
 \left|\frac{d^2}{dt^2}\ell_{g_t}(\gamma)\right|\le C_h\ell_{g_t}(\gamma)\bigl(\norm{\partial_tg_t}_{C^1}^2+\norm{\partial_t^2g_t}_{C^0}\bigr).
 \label{prob:hessian}
\end{equation}
The same estimate holds for iterated closed geodesics.
\end{lemma}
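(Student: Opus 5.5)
The plan is to write the length as a minimum of the energy-type functional over loops, and to differentiate twice along the path $t\mapsto g_t$. Put $L(t,c)=\int_0^1\sqrt{g_t(\dot c,\dot c)}\,d\sigma$ for loops $c$ in the free homotopy class. Then $\ell_{g_t}(\gamma)=L(t,\gamma_t)$, where $\gamma_t$ is the closed $g_t$-geodesic. By the continuation argument in the proof of Proposition~\ref{geo:detector}, the normal periodic Jacobi operator is invertible, so $t\mapsto\gamma_t$ is differentiable, and $\partial_cL(t,\gamma_t)=0$. Differentiating once gives $\frac{d}{dt}\ell=\partial_tL(t,\gamma_t)$, which is the first-variation formula~\eqref{geo:firstvariation}. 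Differentiating again gives
\[
 \frac{d^2}{dt^2}\ell_{g_t}(\gamma)=\partial_t^2L+\partial_t\partial_cL[\dot\gamma_t],
\]
where $\dot\gamma_t$ is the normal variation field. Differentiating the geodesic equation $\partial_cL(t,\gamma_t)=0$ shows that $\dot\gamma_t=-J^{-1}\partial_t\partial_cL$. Here $J$ is the normal Jacobi operator $-\nabla^2-R(\cdot,\dot\gamma)\dot\gamma$ along $\gamma_t$. Hence the second term equals $-\langle J^{-1}\Xi,\Xi\rangle$ with $\Xi=\partial_t\partial_cL$.

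The estimates then go term by term, with the arclength parametrization normalized so that the base is a unit-speed loop. First, $\partial_t^2L$ is an integral over $\gamma$ of expressions quadratic in $\partial_tg$ plus linear in $\partial_t^2g$, evaluated on unit vectors. It is therefore bounded by $C\ell(\norm{\partial_tg}_{C^0}^2+\norm{\partial_t^2g}_{C^0})$. Second, the mixed term $\Xi$ is the normal vector field along $\gamma$ obtained by varying the first-variation integrand $\tfrac12\partial_tg(\dot\gamma,\dot\gamma)$. Its pairing with a variation field $V$ is $\int(\tfrac12\nabla_V(\partial_tg)(\dot\gamma,\dot\gamma)+\partial_tg(\dot\gamma,V'))$, which after integration by parts is a distribution controlled pointwise by $\norm{\partial_tg}_{C^1}$. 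So $\Xi$ lies in $H^{-1}$ along the loop, with $\norm{\Xi}_{H^{-1}}^2\le C\ell\norm{\partial_tg}_{C^1}^2$.

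The key input is a bound $\langle J^{-1}\Xi,\Xi\rangle\le C\norm{\Xi}_{H^{-1}}^2$ that is uniform in the length. Since the sectional curvature is $\le-\kappa<0$ uniformly on $\cU$, the curvature term $-\langle R(V,\dot\gamma)\dot\gamma,V\rangle$ is bounded below by $\kappa|V|^2$. Hence
\[
 \langle JV,V\rangle\ge\int|V'|^2+\kappa\int|V|^2\ge c\norm{V}_{H^1}^2,
\]
and $J:H^1\to H^{-1}$ is coercive with a constant independent of $\ell$. This gives $\norm{J^{-1}}\le C$, and combining the three bounds yields~\eqref{prob:hessian} with an overall factor of $\ell$.

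The main obstacle is this $\ell$-uniform coercivity of $J$. One must set it up in the right function spaces on a circle of length $\ell$ (rather than rescaling to $[0,1]$), so that no constant grows with $T$. One must also make sure that the tangential component of the variation drops out, since it only reparametrizes the loop.

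For iterated geodesics $\gamma=\gamma_0^p$, the same computation applies on the $p$-fold cover, with $J$ acting on $p\ell_0$-periodic fields. Coercivity still holds there because the curvature term stays positive, and both sides of~\eqref{prob:hessian} scale linearly in $p$.
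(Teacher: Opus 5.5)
Your proposal is correct and follows the paper's proof essentially step for step. It uses the same splitting $\frac{d^2}{dt^2}\ell=\tfrac12\int b(v,v)-\tfrac14\int q(v,v)^2-I(V,V)$, the same period-independent coercivity of the normal index form coming from the curvature bound, and the same $\sqrt{\ell}\,\norm{q}_{C^1}$ bound on the mixed term. The only cosmetic difference is that you invert $J\colon H^1\to H^{-1}$ using the $H^{-1}$ norm of $\Xi$, whereas the paper integrates by parts to obtain an $L^2$ bound on $B_q$ and then tests the linearized equation $I(V,W)+B_q(W)=0$ against $W=V$.
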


\begin{proof}
It suffices to calculate at $t=0$. Put $g=g_0$, $q=\partial_tg_t|_0$, and $b=\partial_t^2g_t|_0$. Parametrize the closed geodesic by $g$-arclength, write $v=\dot\gamma$, and choose the parametrizations of its continuations so that their variation $V$ is normal to $v$. The classical second-variation formula gives the index form on periodic normal fields; see~\cite[Chapter~XI, Section~1]{Lang}. In our notation, it is
\[
 I(V,W)=\int_\gamma\bigl(\langle D_sV,D_sW\rangle-\langle R(V,v)v,W\rangle\bigr)\,ds.
\]
If $-\kappa^2<0$ is a uniform upper bound for the sectional curvatures in $\cU$, then
\begin{equation}
 I(V,V)\ge\norm{D_sV}_{L^2}^2+\kappa^2\norm{V}_{L^2}^2.
 \label{prob:coercivity}
\end{equation}
Thus the index form controls both the field $V$ and its covariant derivative along the geodesic. This positive lower bound is what we mean by \emph{coercivity}; its constant does not depend on the period. In particular, $\norm{V}_{L^2}\le\kappa^{-1}I(V,V)^{1/2}$.

Differentiating the first metric variation of length in the curve direction $W$ and integrating by parts, we obtain
\[
 B_q(W)=\frac12\int_\gamma(\nabla_Wq)(v,v)\,ds-\int_\gamma(\nabla_vq)(W,v)\,ds,\qquad |B_q(W)|\le C_h\sqrt{\ell_g(\gamma)}\norm{q}_{C^1}\norm{W}_{L^2}.
\]
Here the arclength variation vanishes because $W$ is normal and $\gamma$ is a geodesic. Differentiation of the critical-point equation gives $I(V,W)+B_q(W)=0$. Taking $W=V$ and using~\eqref{prob:coercivity}, we get
\[
 I(V,V)\le C_h\sqrt{\ell_g(\gamma)}\norm{q}_{C^1}\norm{V}_{L^2}\le C_h\sqrt{\ell_g(\gamma)}\norm{q}_{C^1}I(V,V)^{1/2}.
\]
Thus $I(V,V)\le C_h\ell_g(\gamma)\norm{q}_{C^1}^2$. For the critical value of length, the second derivative is
\[
 \left.\frac{d^2}{dt^2}\ell_{g_t}(\gamma)\right|_{t=0}=\frac12\int_\gamma b(v,v)\,ds-\frac14\int_\gamma q(v,v)^2\,ds-I(V,V).
\]
The preceding bound proves~\eqref{prob:hessian}. Estimate~\eqref{prob:coercivity} also justifies the continuation used in the calculation. Indeed, a periodic normal Jacobi field lies in the kernel of the index form, so $I(V,V)=0$, and~\eqref{prob:coercivity} forces $V=0$. The periodic normal Jacobi operator is self-adjoint and elliptic on the compact parameter circle; having zero kernel, it is invertible. The implicit function theorem therefore gives the required twice differentiable continuation for $C^k$ metrics, $k\ge3$, after removing the freedom of parametrization. For a multiply covered geodesic, the same argument uses its full period and the same constants.
\end{proof}

The following sublevel estimate is precisely the one used in the circle argument; see~\cite[Appendix A, Lemma~A.4]{DK}. The coordinate with a large derivative is allowed to depend on the point.

\begin{lemma}[A sublevel estimate]
\label{prob:small-values}
Let $F\in C^2([-1,1]^D)$, and suppose that, at every point,
\[
 \max_{1\le i\le D}|\partial_iF|\ge b>0,\qquad |\partial_i^2F|\le H\quad(1\le i\le D).
\]
For normalized Lebesgue measure and every $a>0$,
\[
 \mathbb P\{|F|<a\}\le2D\frac{a}{b}\left(1+\frac{4H}{b}\right).
\] \qed
\end{lemma}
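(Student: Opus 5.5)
The plan is to reduce to a one-dimensional estimate along coordinate lines and then sum over coordinates by Fubini. For each $i$ put
\[
 A_i=\{x\in[-1,1]^D:\ |\partial_iF(x)|\ge b,\ |F(x)|<a\}.
\]
By the hypothesis, every point satisfies $|\partial_iF|\ge b$ for some $i$, so $\{|F|<a\}\subset\bigcup_iA_i$. It therefore suffices to show that each $A_i$ has normalized measure at most $2(a/b)(1+4H/b)$.

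Fix $i$ and the other coordinates $\widehat x$. Let $\phi(t)=F(\widehat x,t)$ on $[-1,1]$, so that $|\phi''|\le H$. The first idea would be to count the components of $\{|\phi'|\ge b\}$, but these may be numerous with tiny gaps between them. Instead, cover $[-1,1]$ by consecutive closed intervals $J$ of length at most $b/(2H)$; at most $4H/b+1$ of them are needed.

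On any such $J$, the derivative $\phi'$ varies by at most $H\cdot b/(2H)=b/2$. Suppose $J$ contains a point with $|\phi'|\ge b$. Then $|\phi'|\ge b/2$ on all of $J$, and $\phi'$ has constant sign there. Hence $\phi$ is strictly monotone on $J$ with derivative of modulus at least $b/2$. The preimage of $(-a,a)$ is then an interval of length at most $2a/(b/2)=4a/b$. If no point of $J$ has $|\phi'|\ge b$, then $J$ contributes nothing to the slice of $A_i$.

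Summing over the intervals, the slice of $A_i$ has one-dimensional Lebesgue measure at most $(4a/b)(1+4H/b)$. Dividing by the length $2$ of $[-1,1]$, its normalized measure is at most $2(a/b)(1+4H/b)$. Integrating over $\widehat x$ by Fubini gives the same bound for the normalized measure of $A_i$. Summing over the $D$ indices $i$ yields the claimed estimate $2D(a/b)(1+4H/b)$.

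Measurability of each $A_i$ is clear, since $F\in C^2$. The only genuine point is the choice of intervals of length $b/(2H)$ in place of the components of $\{|\phi'|\ge b\}$. This choice is where the factor $1+4H/b$ comes from, and it avoids any control on how often $\phi'$ oscillates around $\pm b$.
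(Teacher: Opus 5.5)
Your argument is correct and yields exactly the stated constant. You cover $[-1,1]$ by at most $1+4H/b$ intervals of length $b/(2H)$, on each of which the slice of $A_i$ has length at most $4a/b$, and then normalize, apply Fubini and take the union over $i$; when $H=0$, the single interval $J=[-1,1]$ does the job. The paper itself records this lemma without proof, deferring to \cite[Appendix~A, Lemma~A.4]{DK}, so there is no proof here to compare against, although the form of the constant suggests the same one-dimensional slicing argument.
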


\begin{proposition}[A uniform probability estimate]
\label{prob:pair}
Suppose a translated family $g+v_\xi$ from~\eqref{prob:probe} is entirely contained in a bounded $C^k$-family in the geometric neighborhood. Fix two different geometric closed geodesics, allowing repetitions, whose lengths are at most $T\ge1$ for every parameter. Then
\begin{equation}
 \mathbb P\left\{|\ell_{g+v_\xi}(\gamma)-\ell_{g+v_\xi}(\eta)|<a\right\}\le C_{k,\eps}a\exp(C_h\sqrt{k}\,T),\qquad a>0.
 \label{prob:pair-bound}
\end{equation}
The prefactor may depend on the $C^k$ bound. The exponent constant depends only on the fixed geometric neighborhood, dimension, and the constants in Proposition~\ref{geo:detector}.
\end{proposition}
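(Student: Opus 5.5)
The plan is to apply Lemma~\ref{prob:small-values} to $F(\xi)=\ell_{g+v_\xi}(\gamma)-\ell_{g+v_\xi}(\eta)$, restricted to finitely many coordinates $\xi_j$ with $\nu_j\le CN$; the remaining coordinates are frozen, and Fubini handles them at the end. Here $N$ is a frequency cutoff tied to $T$, chosen below.

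\textbf{Conditioning.} Fix all coordinates except one, $\xi_j=t$. By the conditional identity after~\eqref{prob:probe}, $g+v_{(\widehat\xi,t)}$ is obtained from $H_g:=g+v_{(\widehat\xi,0)}$ by adding $\bigl((1+t\phi_j)^{-1}-1\bigr)\widehat H$. Here
\[
 \phi_j=\eps\nu_j^{-(k+n+2)}e_j/(1+\widehat u),\qquad \widehat H=(1+\widehat u)^{-1}h.
\]
So the metric variation $\partial_t$ at $t$ is $-\phi_j(1+t\phi_j)^{-2}\widehat H$, and the first variation formula~\eqref{geo:firstvariation} gives
\[
 \partial_jF=-\tfrac12\Bigl(\int_\gamma-\int_\eta\Bigr)\phi_j(1+t\phi_j)^{-2}\widehat H(\dot\cdot,\dot\cdot)\,ds,
\]
taken at the current metric. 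Lemma~\ref{prob:second-variation} bounds $|\partial_j^2F|\le C T\nu_j^{-2(k+n+2)}\nu_j^{2}\le CT=:H$, since $\norm{e_j}_{C^1}\le C\nu_j$.

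\textbf{Lower bound on the maximal partial derivative.} At the current metric $g'=g+v_\xi$, Proposition~\ref{geo:detector} with $m=\lceil\sqrt k\rceil$ gives $f\ge0$ with $\norm{f}_{C^k}\le1$, $f=df=0$ on $\eta$, and the separating integral~\eqref{prob:detector} at least $\beta:=a_k e^{-C\sqrt k T}$. Replacing $f$ by $\tilde f=f(1+u_\xi)\cdot(\text{bounded weight})$ keeps the $C^k$ norm bounded and converts the weight $\widehat H$, together with the factor $(1+t\phi_j)^{-2}$, into $h$ up to a fixed smooth multiplier. Lemma~\ref{prob:jackson} then gives $\tilde f_N=\sum_{\nu_j\le CN}b_je_j$ with
\[
 \norm{\tilde f-\tilde f_N}_{C^0}\le C_kN^{-k}\quad\text{and}\quad |b_j|\le C.
\]
The functional $\Lambda(\psi)=\tfrac12(\int_\gamma-\int_\eta)\psi\,(\cdots)\,ds$ satisfies $|\Lambda(\psi)|\le CT\norm{\psi}_{C^0}$. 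Hence $|\Lambda(\tilde f_N)|\ge\beta-C_kTN^{-k}\ge\beta/2$, provided we choose $N=C_k'e^{C\sqrt kT/k}T^{1/k}$, which is sub-exponential in the relevant sense. Since $\Lambda(\tilde f_N)=\sum b_j\Lambda(e_j)$ has at most $CN^n$ terms, some $j$ satisfies $|\Lambda(e_j)|\ge\beta/(2CN^n)$. Consequently
\[
 |\partial_jF|\ge b:=c\,\eps\,\nu_j^{-(k+n+2)}\beta N^{-n}\ge c_{k,\eps}\exp\bigl(-C\sqrt kT-C(k+2n+2)\sqrt kT/k\bigr).
\]
All $\nu_j\le CN$ contribute $N^{k+2n+2}=e^{O(\sqrt k T)}$, using $(k+2n+2)/k\le C$. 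So $b\ge c_{k,\eps}e^{-C\sqrt kT}$ with $C$ independent of $k$.

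\textbf{Conclusion and main obstacle.} Lemma~\ref{prob:small-values} with $D=CN^n=e^{O(\sqrt kT/k)\cdot n}$ and $H/b\le CTe^{C\sqrt kT}$ gives
\[
 \mathbb P\le 2D(a/b)(1+4H/b),
\]
and averaging over the frozen coordinates yields~\eqref{prob:pair-bound}, after absorbing polynomial factors in $T$ and the factor $D$ into the exponent, where they cost $O(\sqrt k)T$. The main obstacle is the middle step. The detector must be produced \emph{at every point} of the cube, for the translated and possibly non-conformal metric $g+v_\xi$. Its weight must be matched to the fixed modes $e_j$ of Lemma~\ref{prob:jackson}, uniformly, with $N$ chosen so that the Jackson error, the mode count $N^n$, and the weights $\nu_j^{-(k+n+2)}$ all cost only $O(\sqrt k)T$ in the exponent. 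I would first check that the multiplier converting $\widehat H$ to $h$ has $C^k$ norm controlled by the bounded family, so that it enters only the prefactor.
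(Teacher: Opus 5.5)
Your proposal is correct and follows the paper's proof essentially step by step. The steps match: the detector is applied at each parameter, and the paper's $\widehat f=-(1+u_\xi)^2f$ is your multiplier made explicit, which makes the functional independent of $j$. Then come the Jackson approximation with $N^k\ge C_kT/\beta$, pigeonholing over the $CN^n$ modes, the second-variation bound, and the sublevel lemma after freezing the tail coordinates. The differences are only cosmetic: you compute $\partial_jF$ through the conditional identity rather than by differentiating $v_\xi$ directly, you use a cruder bound on $H$ (dropping the $\eps^2$ factor), and you fix $m=\lceil\sqrt k\rceil$ at the start rather than at the end.
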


\begin{proof}
We first retain $m$ as a parameter. Denote the lower bound in~\eqref{prob:detector} by $A_T$, and let $f$ be a separating function at the metric $g+v_\xi$. In a scalar direction $v$, the derivative of the tensor $v_\xi$ is $-(1+u_\xi)^{-2}vh$. The first variation formula~\eqref{geo:firstvariation} therefore gives the following linear functional of $v$:
\[
 -\frac12\int_\gamma\frac{v}{(1+u_\xi)^2}h(\dot\gamma,\dot\gamma)\,ds_{g+v_\xi}+\frac12\int_\eta\frac{v}{(1+u_\xi)^2}h(\dot\eta,\dot\eta)\,ds_{g+v_\xi}.
\]
Its norm on $C^0(M)$ is at most $C_hT$. To recover the direction $fh$, we take $\widehat f=-(1+u_\xi)^2f$, whose $C^k$ norm is uniformly bounded by $C_k$ on the parameter cube.

We approximate $\widehat f$ by the fixed modes from Lemma~\ref{prob:jackson}. Choose the least integer $N\ge2$ such that
\begin{equation}
 N^k\ge C_kT/A_T.
 \label{prob:cutoff}
\end{equation}
Then $\widehat f_N=\sum_{\nu_j\le CN}b_je_j$ has $|b_j|\le C_k$, and its value under the displayed functional has modulus at least $A_T/2$. Indeed, the approximation changes that value by at most $C_hT\norm{\widehat f-\widehat f_N}_{C^0}\le A_T/2$, after increasing the constant in~\eqref{prob:cutoff}. There are at most $CN^n$ terms, so at least one direction $-(1+u_\xi)^{-2}e_jh$ gives a length derivative of modulus at least $A_T/(C_kN^n)$. Including its weight in~\eqref{prob:probe}, we obtain
\begin{equation}
 |\partial_{\xi_j}(\ell_\gamma-\ell_\eta)|\ge b_T:=c_k\eps A_TN^{-(k+2n+2)},\qquad \nu_j\le CN.
 \label{prob:coordinate}
\end{equation}
This choice is made separately at each parameter. We do not differentiate the separating function or its Fourier coefficients. Although the selected index can vary, its finite list and the lower bound $b_T$ are fixed throughout the parameter cube.

Let us now bound the second derivatives with respect to these coordinates. Direct differentiation gives
\[
 \partial_{\xi_j}v_\xi=-\eps\nu_j^{-(k+n+2)}\frac{e_jh}{(1+u_\xi)^2},\qquad \partial_{\xi_j}^2v_\xi=2\eps^2\nu_j^{-2(k+n+2)}\frac{e_j^2h}{(1+u_\xi)^3}.
\]
By~\eqref{prob:modes}, $\norm{u_\xi}_{C^1}\le C_h\eps$ uniformly in $k\ge3$. Since $1+u_\xi\ge1/2$, the product rule gives $\norm{\partial_{\xi_j}v_\xi}_{C^1}\le C_h\eps$ and $\norm{\partial_{\xi_j}^2v_\xi}_{C^0}\le C_h\eps^2$. Consequently, for $F(\xi)=\ell_{g+v_\xi}(\gamma)-\ell_{g+v_\xi}(\eta)$, Lemma~\ref{prob:second-variation} implies
\[
 |\partial_{\xi_j}^2F|\le C_hT\eps^2.
\]
The acceleration term in~\eqref{prob:hessian} accounts for the nonlinear parametrization of the probe.

We condition on all coordinates with $\nu_j>CN$. For every fixed tail, $F$ is a $C^2$ function of at most $CN^n$ remaining coordinates. Its maximal first derivative is at least $b_T$ by~\eqref{prob:coordinate}, and the preceding second-derivative bound is uniform in the tail. Lemma~\ref{prob:small-values}, followed by integration over the tail, yields
\[
 \mathbb P\{|F|<a\}\le CN^n\frac{a}{b_T}\left(1+\frac{C_hT\eps^2}{b_T}\right).
\]
In particular, no measurable choice of the separating coordinate is needed.

It remains to keep track of the exponential cost. Put $X=m+k/m$, so that $A_T=a_{k,m}e^{-C_hXT}$. The choice~\eqref{prob:cutoff} and the definition of $b_T$ give
\[
 N^n\le C_{k,m}T^{n/k}e^{C_hXnT/k},\qquad b_T^{-1}\le C_{k,m,\eps}T^{1+(2n+2)/k}e^{C_hX(2+(2n+2)/k)T}.
\]
Combining these bounds with the preceding probability estimate, and enlarging the prefactor to include the term containing only $b_T^{-1}$, we obtain
\[
 \mathbb P\{|F|<a\}\le C_{k,m,\eps}a\,T^{3+(5n+4)/k}e^{C_hX(4+(5n+4)/k)T}.
\]
For $k\ge3$, the coefficients involving $1/k$ are bounded in terms of the dimension alone. The polynomial in $T$ can also be absorbed into a fixed additional exponential because $T\ge1$ and $X\ge2$. Thus the last expression is at most $C_{k,m,\eps}a e^{C_hXT}$, after increasing $C_h$ independently of $k$ and $m$. Finally, taking $m=\lceil\sqrt{k}\rceil$, as in Corollary~\ref{geo:sqrt}, gives $X\le2\sqrt{k}+1$ and proves~\eqref{prob:pair-bound}.
\end{proof}

\begin{proposition}[Almost sure exponential gaps]
\label{prob:gaps}
There is $B=B(h)>0$, independent of $k\ge3$, such that almost every metric in every translated probe family as in Proposition~\ref{prob:pair} satisfies
\begin{equation}
 |\ell_g(\gamma)-\ell_g(\eta)|\ge c(g)\exp\left[-B\sqrt{k}\max\{\ell_g(\gamma),\ell_g(\eta)\}\right]
 \label{prob:gap-property}
\end{equation}
for all different unoriented free homotopy classes, with some $c(g)>0$.
\end{proposition}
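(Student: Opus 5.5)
The plan is a Borel--Cantelli argument over dyadic length scales, driven by the pair estimate of Proposition~\ref{prob:pair}. Fix a translated probe family $g+v_\xi$ contained in a bounded $C^k$-family in the geometric neighborhood.

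\emph{Step 1: uniform length control.} The lengths of all classes must be controlled uniformly in $\xi$. By~\eqref{geo:systole} and its reverse inequality $g\le C_0h$ on $\cU$, every class satisfies $c\,\ell_h(\alpha)\le\ell_{g+v_\xi}(\alpha)\le C\,\ell_h(\alpha)$ for all parameters. Group the unordered pairs of distinct unoriented classes by $T_L=L$, $L\in\mathbb N$. The $L$-th group consists of the pairs with $\max\{\ell_h(\gamma),\ell_h(\eta)\}\in[L-1,L)$. For every pair in this group, both metric lengths are at most $CL$ for every parameter, so Proposition~\ref{prob:pair} applies with $T=CL$. By Margulis-type exponential growth of closed geodesics for the fixed metric $h$, or simply from the uniform Anosov property and Lemma~\ref{geo:sections}, the number of classes with $\ell_h\le L$ is at most $Ce^{h_0L}$, where $h_0$ is bounded by the topological entropy of $h$. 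The number of pairs in the $L$-th group is therefore at most $Ce^{2h_0L}$.

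\emph{Step 2: Borel--Cantelli.} Define the events
\[
E_L=\bigl\{\exists\text{ a pair in group }L:\ |\ell_\gamma-\ell_\eta|<e^{-B\sqrt{k}L}\bigr\}.
\]
The union bound and~\eqref{prob:pair-bound} with $a=e^{-B\sqrt kL}$ give
\[
\mathbb P(E_L)\le C_{k,\eps}\exp\bigl[(2h_0+C_hC\sqrt k-B\sqrt k)L\bigr].
\]
Because $\sqrt k\ge1$, choosing $B=2h_0+C_hC+1$ makes this at most $C_{k,\eps}e^{-\sqrt kL}$, which is summable in $L$. This $B$ depends only on $h$ and the fixed neighborhood, not on $k$. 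By Borel--Cantelli, almost every $\xi$ lies in only finitely many $E_L$. For $L\ge L_0(\xi)$, every pair in group $L$ then has gap at least $e^{-B\sqrt kL}$. Since $L\le C\max\{\ell_g(\gamma),\ell_g(\eta)\}+1$, this gap is at least $c\exp[-B'\sqrt k\max\ell_g]$ with $B'=CB$. Rename $B'$ as $B$.

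\emph{Step 3: the finitely many remaining pairs.} The pairs with $L<L_0$ form a finite set. For each such pair, the event $\ell_\gamma=\ell_\eta$ has probability zero, by letting $a\to0$ in~\eqref{prob:pair-bound}. Hence almost surely all these finitely many gaps are positive. Taking $c(g)$ to be the minimum of their normalized gaps together with the constant from Step 2 yields~\eqref{prob:gap-property}.

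\emph{Main obstacle.} The delicate point is uniformity. The exponent constant in~\eqref{prob:pair-bound} and the orbit count must both be independent of $k$ and of the translation, while $k$-dependence is allowed only in the prefactors. Step 2 then gives the same $B$ for all $k$. One also has to check that the definition of ``different unoriented classes'' matches the hypothesis ``different geometric closed geodesics, allowing repetitions'' in Proposition~\ref{prob:pair}. Distinct unoriented classes give either distinct geometric geodesics or different traversal numbers, and both cases are covered by Proposition~\ref{geo:detector}.
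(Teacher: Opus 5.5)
Your proposal is correct and follows the paper's argument: an exponential count of classes, the union bound with Proposition~\ref{prob:pair}, Borel--Cantelli with $B$ larger than the combined exponents, and exclusion of exact equalities (take the union over all countably many pairs, as the paper does, since your $L_0$ depends on $\xi$). The differences are cosmetic: you group pairs by deterministic $h$-length shells and absorb a factor $C$ into $B$ at the end, and for different powers of one geodesic the paper uses the deterministic systole bound $|p-q|\,\ell_g(\gamma_0)\ge\sqrt{c_0}\operatorname{sys}(h)$ from \eqref{geo:systole}, which is simpler than extending the pair estimate.
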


\begin{proof}
In this proof, $C,H,B$ depend on the fixed metric $h$ and its chosen geometric neighborhood, but not on $k$. We first count the classes that can occur below a given length. There are at most $Ce^{HT}$ classes of $h$-length at most $T$. Indeed, choosing a compact fundamental domain in the universal cover, a closed geodesic of length at most $T$ gives a deck transformation, after conjugation, moving a fixed basepoint by at most $T+C$. Cocompactness allows us to divide a joining geodesic into bounded segments and choose nearby orbit points. Successive orbit points differ by an element of a fixed finite set, so the deck transformation is a word of length at most $C(T+1)$ in a fixed finite alphabet. Counting words proves the asserted upper bound, including iterates.

Fix an integer $N\ge1$. Uniform equivalence with $h$ implies that a class with length at most $N$ at one parameter has $h$-length at most $CN$, and hence length at most $C^2N$ at every parameter. There are at most $Ce^{HN}$ such candidate classes, after changing $H$. Proposition~\ref{prob:pair}, with $T=C^2N$, and the union bound therefore give
\[
 \mathbb P\left\{\begin{array}{c}\text{some two classes of lengths at most }N\\\text{have a gap smaller than }e^{-\beta N}\end{array}\right\}\le C_{k,\eps}\exp\bigl[(C\sqrt{k}+2H-\beta)N\bigr]
\]
for all sufficiently large $N$. Pairs of different powers of the same geometric geodesic do not contribute once $e^{-\beta N}$ is below the uniform systole bound~\eqref{geo:systole}.

We choose $B>C+2H+1$ and set $\beta=B\sqrt{k}$. The probabilities are summable. By the first Borel--Cantelli lemma, almost surely every pair of length at most $N$ has gap at least $e^{-\beta N}$ for all sufficiently large integers $N$.

We also need to exclude exact equalities among the finitely many shorter geodesics. For any fixed pair of different geometric geodesics, the two lengths are bounded on the compact probe cube. Letting $a\downarrow0$ in~\eqref{prob:pair-bound} shows that their equality has probability zero. There are countably many pairs, so almost surely none of these equalities occurs. The remaining pairs below the random cutoff from Borel--Cantelli are finite in number and have positive gaps. Reducing $c(g)>0$ includes them. For a pair with maximum length $L$, the choice $N=\lceil L\rceil$ gives $e^{-\beta N}\ge e^{-\beta}e^{-\beta L}$, which proves~\eqref{prob:gap-property}.
\end{proof}

\section{Prevalence}
\label{prob:prevalence-section}

\begin{proof}[Proof of Main Theorem]
Let us first identify the correspondence with the circle argument in~\cite[Appendix A]{DK}. The transverse separation estimate~\eqref{geo:hitsep} plays the role of~\cite[Lemma~3.3]{DK}, while the extreme-neighbour conclusion in Lemma~\ref{geo:cluster} corresponds to Lemma~A.1. The first variation formula~\eqref{geo:firstvariation} and Lemma~\ref{prob:second-variation} play the role of Lemma~A.2. Proposition~\ref{geo:detector}, followed by the Fourier approximation in Lemma~\ref{prob:jackson} and the extraction of a coordinate in~\eqref{prob:coordinate}, corresponds to Proposition~A.3. Lemma~\ref{prob:small-values} is its Lemma~A.4. Proposition~\ref{prob:gaps} supplies the probability-one conclusion obtained there in Appendix~A.4. We now complete the proof by the translation argument of Appendix~A.5, keeping track of arbitrary tensor translations.

First, the asserted set is Borel. Indeed, the set of unoriented free homotopy classes is countable, and the length in each class is continuous in the metric. For $\beta=B\sqrt{k}$, the set is
\[
 \bigcup_{q=1}^\infty\ \bigcap_{\gamma\ne\eta}\left\{g\in\cU\cap C^k:|\ell_g(\gamma)-\ell_g(\eta)|\ge q^{-1}e^{-\beta\max\{\ell_g(\gamma),\ell_g(\eta)\}}\right\}.
\]
Each condition in the intersection is closed relative to $\cU\cap C^k$. Denote the complement of this Borel set in $\cU\cap C^k$ by $E$.

We next prove that $E$ is shy locally. Choose a $C^k$-ball $B(g_0,r)$ with $B(g_0,3r)\subset\cU$, and take the probe~\eqref{prob:probe} with compact support $K$ contained in the radius-$r$ ball about zero. For any tensor $a$, suppose that $(a+K)\cap B(g_0,r)$ is nonempty, and choose $v_0\in K$ with $a+v_0\in B(g_0,r)$. Every $v\in K$ then satisfies
\[
 \norm{a+v-g_0}_{C^k}\le\norm{a+v_0-g_0}_{C^k}+\norm{v-v_0}_{C^k}<3r.
\]
The whole translated family lies in the bounded ball $B(g_0,3r)$. Proposition~\ref{prob:gaps} therefore gives $\mathbb P\{a+v_\xi\in E\cap B(g_0,r)\}=0$. If the translated support does not meet the smaller ball, the same equality is immediate. Thus the law of $v_\xi$ witnesses shyness of $E\cap B(g_0,r)$ for every tensor translation $a$. The translated metrics need not remain in one conformal class.

Since $C^k(S^2T^*M)$ is separable, countably many such smaller balls cover $\cU\cap C^k$. Let $E_j$ be the corresponding exceptional pieces. We finish by constructing one witness for their union. For each $j$, choose a random tensor $w_j$ witnessing shyness of $E_j$, with compact support of radius at most $2^{-j}$. This is possible because the probe size $\eps$ is arbitrary. Choose these tensors independently. The series $w=\sum_jw_j$ converges uniformly over the product of their supports, since its tail norm is at most $\sum_{j>L}2^{-j}$. Its sum is consequently a continuous image of a compact product, and its law has compact support.

For a fixed translation $a$ and an index $j$, conditioning on all $w_i$ with $i\ne j$ leaves one fixed translation of $w_j$. Its witnessing property gives
\[
 \mathbb P\left\{a+\sum_iw_i\in E_j\,\middle|\,(w_i)_{i\ne j}\right\}=0.
\]
Integration over the conditioned variables, followed by the countable union over $j$, shows that $\mathbb P\{a+w\in E\}=0$ for every $a$. This proves prevalence in $\cU\cap C^k$.

Finally, every nonempty $C^k$-ball contained in $\cU$ contains a translate of an arbitrarily small probe. Proposition~\ref{prob:gaps} provides a metric satisfying~\eqref{prob:gap-property} in that ball. The asserted set is therefore dense.
\end{proof}

\end{document}